\theoremstyle{plain}
\newtheorem{theorem}{Theorem}[section]
\newtheorem{lemma}[theorem]{Lemma}
\newtheorem{claim}[theorem]{Claim}
\theoremstyle{definition}
\newtheorem{definition}[theorem]{Definition}
\newtheorem{remark}[theorem]{Remark}
\numberwithin{equation}{section}
\newcommand{\R}{\mathbb{R}}
\newcommand{\Q}{\mathbb{Q}}
\newcommand{\N}{\mathbb{N}}
\newcommand{\rank}{\qopname\relax o{rk}}
\newcommand{\len}{\qopname\relax o{length}}
\def\dusb{DUSB}
\begin{document}

\baselineskip=17pt

\title{Classification of bounded Baire class $\xi$ functions}

\author{Viktor Kiss}

\date{}

\maketitle

\renewcommand{\thefootnote}{}

\footnote{2010 \emph{Mathematics Subject Classification}: Primary 26A21; Secondary 03E15, 54H05.}

\footnote{\emph{Key words and phrases}: Baire class $\xi$ functions, ordinal ranks, USC functions, descriptive set theory.}

\renewcommand{\thefootnote}{\arabic{footnote}}
\setcounter{footnote}{0}

\begin{abstract}
  Kechris and Louveau showed that each real-valued bounded Baire class 1 
  function defined on a compact metric space can be written as an alternating 
  sum of a decreasing countable transfinite sequence of upper semi-continuous 
  functions. 
  Moreover, the length of the shortest such sequence is essentially the 
  same as the value of certain natural ranks they defined on the Baire 
  class 1 functions. 
  They also introduced the notion of pseudouniform convergence to generate 
  some classes of bounded Baire class 1 functions from others. 
  The main aim of this paper is to generalize their results to 
  Baire class $\xi$ functions. 
  For our proofs to go through, it was essential to first obtain 
  similar results for Baire class 1 functions defined 
  on not necessary compact Polish spaces.
  Using these new classifications of bounded Baire class $\xi$ functions, 
  one can define natural ranks on these classes. 
  We show that these ranks essentially coincide 
  with those defined by Elekes et.~al.~\cite{EKV}.
\end{abstract}

\section{Introduction}

A real-valued function on a completely metrizable 
topological space is of \emph{Baire class 1}, if it is the pointwise limit 
of continuous functions. 
A \emph{rank} on a class of functions is a map assigning an ordinal to each 
member of the class, typically measuring complexity. 

Kechris and Louveau \cite{KL} investigated the properties of 
three natural ranks on Baire class 1 functions on compact metric spaces. 
We will recall their definitions in Section \ref{ss:ranks}. 
They proved, among other things, that these ranks \emph{essentially} coincide 
on bounded 
functions, showing that for a bounded Baire class 1 function $f$ and an
ordinal $1 \le \lambda < \omega_1$, the value of one of these ranks on $f$ 
is at most $\omega^\lambda$ iff the same holds for the other ranks. 
This fact made it possible to define a hierarchy of these functions: 
for a bounded Baire class 1 function $f$, let 
$f \in \mathscr{B}_1^\lambda$, if the value of one (or equivalently, all) 
of these ranks on $f$ is at most $\omega^\lambda$. 

They also proved that every bounded Baire class 1 function $f$ can be 
written as the alternating sum of a decreasing transfinite sequence of 
upper semi-continuous (USC) functions. 
(Recall that a function $g : X \to \R$ is USC if $\{ x \in X : g(x) < c\}$ 
is open in $X$ for every $c \in \R$.) 
Moreover, they showed that the length of the 
shortest such sequence is at most $\omega^\lambda$ if and only if 
$f \in \mathscr{B}_1^\lambda$. Hence, if we consider the length of the 
shortest such sequence as the rank of the function $f$, we 
obtain a new rank on the bounded Baire class $1$ functions that 
coincides essentially with the three ranks investigated by Kechris and 
Louveau. 

They also introduced the notion of pseudouniform convergence, and 
showed that $\mathscr{B}_1^{\lambda + 1}$ contains exactly those bounded 
Baire class 1 functions that can be written as the pseudouniform limit of a 
sequence of functions from $\mathscr{B}_1^\lambda$. For limit $\lambda$, 
they proved that $f \in \mathscr{B}_1^\lambda$ if and only if $f$ 
is the uniform limit of functions 
from $\bigcup_{\eta < \lambda} \mathscr{B}_1^\eta$. 

Elekes, Kiss and Vidny\'anszky \cite{EKV} generalized their results 
concerning ranks to functions defined on general Polish spaces. 
They showed that most of the results proved by Kechris and Louveau remain 
true in this general setting. They defined analogous ranks on the 
Baire class $\xi$ functions. A function is of \emph{Baire class $\xi$} 
for a countable ordinal $\xi > 1$, if it can be written as the 
pointwise limit of functions from smaller classes. Similarly to the Baire 
class 1 case, for a bounded Baire class $\xi$ function $f$ and an
ordinal $1 \le \lambda < \omega_1$, the value of one of these ranks on $f$ 
is at most $\omega^\lambda$ iff the same holds for the other ranks. 
We again denote by $\mathscr{B}_\xi^\lambda$ the set of those bounded 
Baire class $\xi$ functions with value of one (or equivalently, all) 
of these ranks at most $\omega^\lambda$. 

The motivation for investigating ranks on Baire class $\xi$ functions 
came from calculating the so called solvability cardinal of systems of 
difference equations (see \cite{EL}), that are connected to paradoxical 
geometric decompositions (see e.g.~\cite{L1,L}).

This paper is a continuation of the research started in \cite{EKV}. 
The main aim is to generalize the results of Kechris and 
Louveau concerning bounded Baire class $1$ functions to the 
Baire class $\xi$ case. We show that a bounded Baire class $\xi$ 
function $f$ can be written as the alternating sum of a decreasing 
transfinite sequence $(f_\eta)_\eta$ of non-negative 
\emph{semi-Borel class $\xi$} functions 
(i.e.~$\{x : f_\eta(x) < c\} \in \boldsymbol{\Sigma}^0_\xi$ 
for all $c \in \R$ and $\eta$). As in the Baire class 1 case, one 
can define a rank by assigning the length of the shortest such sequence 
to the function $f$. 
We show that this rank is essentially equal to those defined 
in \cite{EKV}. We also show a method of generating 
the family $\mathscr{B}_\xi^{\lambda + 1}$ from 
$\bigcup_{\eta < \lambda} \mathscr{B}_\xi^{\eta + 1}$. 

Our approach is based on topology refinements. Because of this, 
it was essential to obtain the results of Kechris and Louveau 
for Baire class 1 functions defined on general Polish spaces. 
Our proofs build on ideas of Kechris and Louveau, 
however, since they relied on the compactness of the space 
(they used for example the facts that the rank of a characteristic function is 
always a successor ordinal and that a decreasing sequence of 
USC function converging pointwise to $0$ converges uniformly), 
it was necessary reprove their results.

\section{Preliminaries}

Most of the following basic notations and facts can be found in \cite{K}.

Throughout this paper $(X, \tau)$ is an uncountable 
\emph{Polish space}, i.e., a separable and completely metrizable 
topological space. 

For a set $H$ we denote the characteristic function, closure and 
complement of $H$ by $\chi_H$, $\overline{H}$ and $H^c$, 
respectively. 

We use the notation $\boldsymbol{\Sigma}^0_\xi$, 
$\boldsymbol{\Pi}^0_\xi$ and $\boldsymbol{\Delta}^0_\xi$ for 
the $\xi$th \emph{additive, multiplicative and ambiguous classes} 
of the Borel hierarchy, i.e., $\boldsymbol{\Sigma}^0_1 = \tau$, 
$\boldsymbol{\Pi}^0_1 = \{G^c : G \in \tau\}$, 
$$
  \boldsymbol{\Sigma}^0_\xi = 
  \left(\bigcup_{\lambda < \xi} 
  \boldsymbol{\Pi}^0_\lambda\right)_\sigma \quad
  \boldsymbol{\Pi}^0_\xi = 
  \left(\bigcup_{\lambda < \xi} 
  \boldsymbol{\Sigma}^0_\lambda\right)_\delta \text{ and }
  \boldsymbol{\Delta}^0_\xi = 
  \boldsymbol{\Sigma}^0_\xi\cap \boldsymbol{\Pi}^0_\xi, 
$$
where $\mathcal{H}_\sigma = \left\{\bigcup_{n \in \N} H_n : H_n \in 
\mathcal{H}\right\}$ and 
$\mathcal{H}_\delta = \left\{\bigcap_{n \in \N} H_n : H_n \in 
\mathcal{H}\right\}$.

For a function $f : X \to \R$ we write 
$\|f\| = \sup_{x \in X} |f(x)|$, whereas $|f|$ denotes the function 
$x \mapsto |f(x)|$. If $c \in \R$ then we let 
$\{f < c\} = \{x \in X : f(x) < c\}$. We use the notations 
$\{f > c\}, \{f \le c\}$ and $\{f \ge c\}$ similarly. 

We denote the family of real valued functions defined on $X$ that are of Baire class $\xi$ by $\mathcal{B}_\xi$. It is well-known that a function $f$ is of Baire class $\xi$ iff 
$f^{-1}(U) \in \boldsymbol{\Sigma}^0_{\xi + 1}$ for every 
$U \subseteq \R$ open iff $\{f < c\}, \{f > c\} \in 
\boldsymbol{\Sigma}^0_{\xi + 1}$ for every $c \in \R$. 
We use the abbreviation USC for upper semi-continuous functions, i.e., 
a function $f : X \to \R$ is USC if $\{f < c\}$ is open for every 
$c \in \R$. As an analogue, a function $f$ is a \emph{semi-Borel class 
$\xi$} function if $\{f < c\} \in \boldsymbol{\Sigma}^0_\xi$ for 
every $c \in \R$. Note that the pointwise infimum of an arbitrary class of 
non-negative USC functions is USC.

For a countable ordinal $\xi \ge 1$ we denote by $\dusb_\xi$ 
the set of non-negative, bounded, transfinite decreasing sequences of 
semi-Borel class $\xi$ functions $(f_\eta)_{\eta < \lambda}$ with 
$\lambda < \omega_1$ and $f_\eta \to 0$ as $\eta \to \lambda$ for 
limit $\lambda$. 
The \emph{length} of a sequence 
$(f_\eta)_{\eta < \lambda} \in \dusb_\xi$ is 
$\len((f_\eta)_{\eta < \lambda}) = \lambda$. 

If $\tau'$ is a topology on $X$ then we denote the set of Baire class 
$\xi$ functions with respect to $\tau'$ by $\mathcal{B}_\xi(\tau')$. 
Analogously, the notation $\boldsymbol{\Sigma}^0_\xi(\tau')$ stands 
for the $\xi$th additive class of $(X, \tau')$, and similarly for 
$\boldsymbol{\Pi}^0_\xi(\tau')$ and 
$\boldsymbol{\Delta}^0_\xi(\tau')$. 
Moreover, we will use the notation $\dusb_\xi(\tau')$ analogously. 

\subsection{Short introduction to ranks}
\label{ss:ranks}
A \emph{rank} on a class of functions $\mathcal{F}$ is a map assigning 
an ordinal to each $f \in \mathcal{F}$. 
In this section we give the basic definitions about ranks on the Baire class 
$\xi$ functions that we will need. For more on ranks defined on the Baire 
class 1 functions on a compact space see \cite{KL}, and for the 
generalizations for the Baire class $\xi$ functions on Polish spaces see 
\cite{EKV}.

\subsubsection{Derivatives}
The definition of some ranks will use the notion of a 
\emph{derivative operation}. 
A \emph{derivative} on the closed subsets of $X$ is a map 
$D: \boldsymbol{\Pi}^0_1 \to \boldsymbol{\Pi}^0_1$ 
such that $D(A) \subseteq A$ and 
$A \subseteq B \Rightarrow D(A) \subseteq D(B)$ for every 
$A, B \in \boldsymbol{\Pi}^0_1$. 
In the definition below, every derivative operation will satisfy 
these conditions. However, we omit the proofs of these easy facts; 
for a more thorough introduction consult the above references. 

For a derivative $D$ we define the \emph{iterated derivatives} 
of the closed set $F$ as follows: 
\begin{align*}
  D^0(F) &= F, \\
  D^{\theta + 1}(F) &= D(D^{\theta}(F)), \\
  D^{\theta}(F) &= \bigcap_{\eta < \theta} D^{\eta}(F) 
    \text{ if $\theta$ is limit.}
\end{align*}
The \emph{rank} of $D$ is the smallest 
ordinal $\theta$, such that $D^{\theta}(X) = \emptyset$, if such 
ordinal exists, $\omega_1$ otherwise. 
We denote the rank of $D$ by $\rank(D)$.

\subsubsection{Ranks on Baire class 1 functions}

Now we look at ranks on the Baire class 1 functions. The 
\emph{separation rank} has been first introduced by Bourgain \cite{B}.     Let $A$ and $B$ be two subsets of $X$. We associate a derivative 
with them by $D_{A, B}(F) = 
\overline{F \cap A} \cap \overline{F \cap B}$ and denote the rank of 
this derivative by $\alpha(A, B)$. The \emph{separation rank} of a Baire class 1 function $f$ is 
$$
  \alpha(f) = 
  \sup_{\begin{subarray}{c} p < q \\ p, q \in \Q \end{subarray}}
  \alpha(\{f \le p\}, \{f \ge q \}).
$$

The \emph{oscillation rank} was investigated by many authors, 
see e.g.~\cite{HOR}. 
The \emph{oscillation} of a function $f:X \to \R$ at a point 
$x \in X$ restricted to a closed set $F \subseteq X$ is
\begin{equation*}
  \omega(f, x, F) = \inf \left\{ \sup_{x_1, x_2 \in U \cap F} 
  |f(x_1) - f(x_2)|: \text{$U$ open, $x \in U$} \right\}. 
\end{equation*}
For each $\varepsilon > 0$ consider the derivative 
$D_{f, \varepsilon}(F) = 
    \left\{ x \in F : \omega(f, x, F) \ge \varepsilon \right\}.$
The \emph{oscillation rank} of a function $f$ is 
\begin{equation}
  \label{e:beta(f)}
  \beta(f) = \sup_{\varepsilon > 0} \rank(D_{f, \varepsilon}).
\end{equation} 

Next we define the \emph{convergence rank}, see e.g.~Zalcwasser \cite{Z} 
and Gillespie and Hurwitz \cite{GH}. 
Let $(f_n)_{n \in \N}$ be a sequence of real valued functions 
on $X$. The \emph{oscillation} of this sequence at a point $x$ 
restricted to a closed set $F \subseteq X$ is 
\begin{equation*}
  \omega((f_n)_{n \in \N}, x, F) = 
    \inf_{
      \begin{subarray}{c} x \in U \\ \text{$U$ open} 
    \end{subarray}} 
    \inf_{N \in \N} 
    \sup \left\{ |f_m(y) - f_n(y)| : 
    n, m \ge N,\; y \in U \cap F \right\}.     
\end{equation*}
Consider a sequence $(f_n)_{n \in \N}$ of 
functions, and for each $\varepsilon > 0$, let a derivative be defined by 
$D_{(f_n)_{n \in \N}, \varepsilon}(F) = \left\{ x \in F : 
\omega((f_n)_{n \in \N}, x, F) \ge \varepsilon \right\}.$ 
Again, for a sequence $(f_n)_{n \in \N}$ let 
\begin{equation} 
  \label{e:gamma(f_n)}
  \gamma((f_n)_{n \in \N}) = \sup_{\varepsilon > 0} 
  \rank\left(D_{(f_n)_{n \in \N}, \varepsilon}\right).
\end{equation} 
For a Baire class 1 function $f$ let the \emph{convergence rank} 
of $f$ be defined by
\begin{equation}
  \label{e:gamma(f)}
 \gamma(f) = \min \left\{
 \gamma((f_n)_{n \in \N}) : 
   \forall n \text{ $f_n$ is continuous and $f_n \to f$ 
   pointwise} 
 \right\}.
\end{equation}

\subsubsection{Ranks on Baire class $\xi$ functions}

Let $(F_\eta)_{\eta < \lambda}$ be a continuous, (i.e, 
for a limit ordinal $\theta < \lambda$, 
$\bigcap_{\eta < \theta} F_\eta = F_\theta$) decreasing 
sequence of $\boldsymbol{\Pi}^0_\xi$ sets for some $\lambda < \omega_1$ 
with $F_0 = X$ and $\bigcap_{\eta < \lambda} F_\eta = \emptyset$ if 
$\lambda$ is limit. We say that the sets $A$ and $B$ can be separated by 
the transfinite difference of this sequence if 
$$
  A \subseteq \bigcup_{\begin{subarray}{c}
  \eta < \lambda \\ \text{$\eta$ even}
  \end{subarray}} F_\eta \setminus F_{\eta + 1} 
  \subseteq B^c,
$$
where $F_\eta = \emptyset$ if $\eta \ge \lambda$. 
By $\alpha_\xi(A, B)$ we denote the length of the shortest such sequence if 
there is any,
otherwise we let $\alpha_\xi(A, B) = \omega_1$. 
We define the \emph{modified separation rank} of a 
Baire class $\xi$ function $f$ as 
\begin{equation*}
  \alpha_\xi(f) = 
    \sup_{\begin{subarray}{c} p < q \\ p, q \in \Q \end{subarray}}
    \alpha_\xi(\{f \le p\}, \{f \ge q \}).
\end{equation*}

Now we introduce one of the methods used in \cite{EKV} to construct 
ranks on the Baire class $\xi$ functions from existing ranks on the 
Baire class 1 functions.

Let $f$ be of Baire class $\xi$. Let 
\begin{equation}
  \label{e:T_(f, xi) def}
  T_{f, \xi} = \{\tau':\tau' \supseteq \tau \text{ Polish}, 
  \tau' \subseteq \boldsymbol{\Sigma}^0_\xi(\tau), 
  f \in \mathcal{B}_1(\tau')\}.
\end{equation}
Let $\rho$ be a rank on the Baire class 1 functions and let  
\begin{equation*}
  \rho_\xi^*(f) = \min_{\tau' \in T_{f, \xi}} \rho_{\tau'}(f), 
\end{equation*}
where $\rho_{\tau'}(f)$ is just the $\rho$ rank of $f$ in the 
topology $\tau'$. This method yields the rank $\rho^*_\xi$ on the 
Baire class $\xi$ functions.

We use the notation 
$$
  \mathscr{B}_\xi^\lambda = 
  \{f \in \mathcal{B}_\xi : 
  \text{$f$ is bounded and } \alpha_\xi(f) \le \omega^\lambda\}.
$$
We also use the notation $\mathscr{B}_\xi^\lambda(\tau')$ for the 
corresponding class with respect to the topology $\tau'$ on $X$. 
Note that by \cite[3.14]{EKV} and \cite[3.35]{EKV} for a bounded 
Baire class 1 function $f$ we have 
$f \in \mathscr{B}_1^\lambda \Leftrightarrow \alpha(f) \le \omega^\lambda 
\Leftrightarrow \beta(f) \le \omega^\lambda \Leftrightarrow 
\gamma(f) \le \omega^\lambda$ and by \cite[5.7]{EKV}, for a bounded 
function $f \in \mathcal{B}_\xi$ we have 
$f \in \mathscr{B}_\xi^\lambda \Leftrightarrow \alpha_\xi^*(f) \le \omega^\lambda 
\Leftrightarrow \beta_\xi^*(f) \le \omega^\lambda \Leftrightarrow 
\gamma_\xi^*(f) \le \omega^\lambda$. 

\begin{remark}
  \label{r:bounded classes}
  For a function $f$, $f \in \mathscr{B}_\xi^\lambda$ if and only if 
  there exists a topology $\tau' \in T_{f, \xi}$ such that 
  $f \in \mathscr{B}_1^\lambda(\tau')$. This can be easily seen 
  as $f \in \mathscr{B}_\xi^\lambda \Leftrightarrow 
  \alpha^*_\xi(f) \le \omega^\lambda \Leftrightarrow 
  \exists \tau' \in T_{f, \xi} (\alpha_{\tau'}(f) \le \omega^\lambda) 
  \Leftrightarrow \exists \tau' \in T_{f, \xi} 
  (f \in \mathscr{B}_1^\lambda(\tau'))$.
\end{remark}

Now we prove three lemmas about these ranks that will be useful later on.

\begin{lemma}
  \label{l:alpha = beta for char}
  For a characteristic function $\chi_A \in \mathcal{B}_1$, 
  $\alpha(f) = \beta(f)$. 
\end{lemma}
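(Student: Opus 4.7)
The plan is to reduce both ranks to the rank of the single derivative
$$
  D(F) = \overline{F \cap A} \cap \overline{F \cap A^c}
$$
acting on closed subsets of $X$, by direct inspection of the definitions. The lemma should then follow immediately.

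First I would compute $\alpha(\chi_A)$. Since $\chi_A$ takes only the values $0$ and $1$, for any rational pair $p<q$ the level set $\{\chi_A \le p\}$ equals $\emptyset$, $A^c$, or $X$, and $\{\chi_A \ge q\}$ equals $\emptyset$, $A$, or $X$. In every case except $0 \le p < 1$ and $0 < q \le 1$, at least one of the two sets is empty, so the derivative $D_{\{\chi_A \le p\},\{\chi_A \ge q\}}$ is identically empty and has rank at most $1$. In the remaining cases the derivative is exactly $D$. Taking the supremum over rational $p<q$ therefore yields $\alpha(\chi_A) = \rank(D)$ (this equality also covers the degenerate situations $A=\emptyset$ or $A=X$, where both sides equal $1$).

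Next I would compute $\beta(\chi_A)$. Because $\chi_A$ is $\{0,1\}$-valued, for any closed $F \subseteq X$ and any $x$ the oscillation $\omega(\chi_A, x, F)$ is either $0$ or $1$, and equals $1$ precisely when every open neighborhood $U$ of $x$ meets both $F \cap A$ and $F \cap A^c$, i.e.\ when $x \in \overline{F \cap A} \cap \overline{F \cap A^c}$. Since $F$ is closed, this already forces $x \in F$, so
$$
  D_{\chi_A, \varepsilon}(F) = \begin{cases} D(F), & 0 < \varepsilon \le 1, \\ \emptyset, & \varepsilon > 1. \end{cases}
$$
Taking the supremum over $\varepsilon > 0$ in \eqref{e:beta(f)} therefore gives $\beta(\chi_A) = \rank(D)$.

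Combining the two computations yields $\alpha(\chi_A) = \rank(D) = \beta(\chi_A)$. I do not anticipate any real obstacle; the whole content of the lemma is the observation that, once $f$ is restricted to be a characteristic function, the two a priori different derivative operations used in the definitions of $\alpha$ and $\beta$ collapse to the same map $D$.
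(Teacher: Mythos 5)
Your proposal is correct and takes essentially the same route as the paper: both arguments collapse the two derivative operations to the single map $F \mapsto \overline{F \cap A} \cap \overline{F \cap A^c}$, the paper by verifying directly that $D_{\{\chi_A \le 0\},\{\chi_A \ge 1\}}(F) = D_{\chi_A, \varepsilon}(F)$ for $0 < \varepsilon < 1$. Your write-up is merely a bit more explicit about the points the paper leaves implicit (the degenerate rational pairs $p < q$, the range $\varepsilon > 1$, and the trivial cases $A = \emptyset$ or $A = X$), all of which check out.
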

\begin{proof}
  It is enough to prove that for every  $\varepsilon < 1$ and 
  $F \subseteq X$ closed, we have $D_{\{\chi_A \le 0\},\{\chi_A \ge 1\}}(F) = 
  D_{\chi_A, \varepsilon}(F)$. 
  Let $x \in X$ then $x \in D_{\chi_A, \varepsilon}(F) \Leftrightarrow 
  \omega(f, x, F) \ge \varepsilon \Leftrightarrow 
  \left(x \in U \text{ is open } \Rightarrow \exists y,z \in U \cap F 
  (y \in A \land z \not \in A) \right)\Leftrightarrow 
  x \in \overline{F \cap A} \cap \overline{F \cap A^c} \Leftrightarrow 
  x \in D_{\{\chi_A \le 0\},\{\chi_A \ge 1\}}(F)$. 
\end{proof}
\begin{lemma}
  \label{l:gamma is additive}
  Let $(f_n)_{n \in \N}, (g_n)_{n \in \N}$ be two sequences 
  of functions such that  
  $\gamma((f_n)_{n \in \N}), \gamma((g_n)_{n \in \N}) \le 
  \omega^\lambda$ for some $\lambda < \omega_1$. Then 
  $\gamma((f_n + g_n)_{n \in \N}) \le \omega^\lambda$.
\end{lemma}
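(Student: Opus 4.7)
The plan is as follows. By the definition of $\gamma$ as a supremum over $\varepsilon > 0$, it suffices to show $\rank(D_{(f_n + g_n), \varepsilon}) \le \omega^\lambda$ for every $\varepsilon > 0$.

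The first step is an oscillation triangle inequality: for every $x \in X$ and closed $F \subseteq X$,
$$
  \omega((f_n + g_n)_{n \in \N}, x, F) \le \omega((f_n)_{n \in \N}, x, F) + \omega((g_n)_{n \in \N}, x, F).
$$
This follows from $|(f_m + g_m)(y) - (f_n + g_n)(y)| \le |f_m(y) - f_n(y)| + |g_m(y) - g_n(y)|$ by taking $\sup$ over $(y, m, n)$ with $m, n \ge N$ (sup of a sum is bounded by the sum of the sups) and then $\inf_N$ and $\inf_U$ (both attained as monotone decreasing limits, over which addition passes through). Consequently, whenever $\omega((f_n + g_n), x, F) \ge \varepsilon$ we have $\omega((f_n), x, F) \ge \varepsilon/2$ or $\omega((g_n), x, F) \ge \varepsilon/2$, giving the set-theoretic inclusion
$$
  D_{(f_n + g_n), \varepsilon}(F) \subseteq D_{(f_n), \varepsilon/2}(F) \cup D_{(g_n), \varepsilon/2}(F).
$$
Writing $E(F)$ for the right-hand side, a routine transfinite induction using monotonicity of $E$ and $D_{(f_n + g_n), \varepsilon}$ yields $D_{(f_n + g_n), \varepsilon}^{\theta}(X) \subseteq E^\theta(X)$ for every ordinal $\theta$, so the task reduces to showing $\rank(E) \le \omega^\lambda$.

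The main obstacle lies precisely in this union bound. A direct ordinal-addition argument would give only $\rank(E) \le \omega^\lambda + \omega^\lambda$, which is too large, so it is essential to exploit the additive indecomposability of $\omega^\lambda$, in particular the partition property that $[0, \omega^\lambda) = A \cup B$ implies one of $A$, $B$ has order type $\omega^\lambda$ (as order types of subsets of an ordinal satisfy the natural-sum bound, and the natural sum of two ordinals $< \omega^\lambda$ is again $< \omega^\lambda$). Assume for contradiction that $x \in D_{(f_n + g_n), \varepsilon}^{\omega^\lambda}(X)$. Then for every $\eta < \omega^\lambda$ we have $\omega((f_n + g_n), x, D^{\eta}(X)) \ge \varepsilon$, and the triangle inequality forces $\omega((f_n), x, D^{\eta}(X)) \ge \varepsilon/2$ or $\omega((g_n), x, D^{\eta}(X)) \ge \varepsilon/2$. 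Partitioning the stages accordingly and invoking the partition property, without loss of generality there is a set of stages of order type $\omega^\lambda$ on which the $(f_n)$-oscillation of $x$ is at least $\varepsilon/2$. One then needs to convert this cofinal stream of $(f_n)$-oscillation conditions on the chain $(D^\eta(X))$ into an honest $\omega^\lambda$-long iteration of $D_{(f_n), \varepsilon/2}$ starting from $X$, in order to contradict $\rank(D_{(f_n), \varepsilon/2}) \le \omega^\lambda$. Since the sets $D^\eta(X)$ are not themselves iterates of $D_{(f_n), \varepsilon/2}$, this transfer is the delicate part of the proof and is likely to require a careful simultaneous transfinite induction that reindexes the iteration by the order-type-$\omega^\lambda$ subset $A$; this bookkeeping is where I would expect the bulk of the technical effort to go.
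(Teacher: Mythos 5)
There is a genuine gap, and you have in fact flagged it yourself: everything up to and including the inclusion $D_{(f_n+g_n),\varepsilon}(F) \subseteq D_{(f_n),\varepsilon/2}(F) \cup D_{(g_n),\varepsilon/2}(F)$ is routine, and the entire content of the lemma is the claim that the union derivative $E$ still has rank at most $\omega^\lambda$ --- the very step you defer as ``delicate bookkeeping.'' That step is not bookkeeping. For general monotone derivatives, the only soft bound available is $\rank(D_1 \cup D_2) \le \rank(D_1) \mathbin{\#} \rank(D_2)$ (natural sum), which yields only $\omega^\lambda \cdot 2$. In Kechris--Louveau's compact setting this suffices, because in a compact space a decreasing transfinite chain of nonempty closed sets has nonempty intersection, so the rank of a derivative is never a limit ordinal; hence $\rank(D_{(f_n),\varepsilon/2}) \le \omega^\lambda$ forces $\rank < \omega^\lambda$, and the natural sum of two ordinals below $\omega^\lambda$ stays below $\omega^\lambda$. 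On a general Polish space --- the setting of this paper, whose introduction explicitly singles out these compactness crutches as unavailable --- the individual ranks can equal $\omega^\lambda$ exactly, and your reduction leaves you with nothing better than $\omega^\lambda \cdot 2$.

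Your sketched repair does not close the gap. Granting the partition step (which is fine: if $[0,\omega^\lambda) = A \cup B$ then one of $A, B$ has order type $\omega^\lambda$), you obtain $x \in D_{(f_n),\varepsilon/2}(D^{\eta}(X))$ for all $\eta$ in a set $A$ of order type $\omega^\lambda$, where $D^{\eta}(X)$ are the iterates of the \emph{sum} derivative. To contradict $\rank(D_{(f_n),\varepsilon/2}) \le \omega^\lambda$ you would need to align these with the iterates of $D_{(f_n),\varepsilon/2}$ itself, e.g.\ via inclusions $D^{\eta_\iota}(X) \subseteq D_{(f_n),\varepsilon/2}^{\iota}(X)$ along the increasing enumeration $(\eta_\iota)$ of $A$. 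But the inductive step fails: from $D^{\eta_\iota}(X) \subseteq D_{(f_n),\varepsilon/2}^{\iota}(X)$ one only gets $D^{\eta_\iota + 1}(X) \subseteq D_{(f_n),\varepsilon/2}^{\iota+1}(X) \cup D_{(g_n),\varepsilon/2}\bigl(D_{(f_n),\varepsilon/2}^{\iota}(X)\bigr)$, since $D(F) \subseteq D_1(F) \cup D_2(F)$ gives no inclusion of $D(F)$ into $D_1(F)$; every stage in which the $g$-part fires destroys the alignment, and monotonicity alone returns only $x \in D_{(f_n),\varepsilon/2}(X)$ over and over. Note also that the paper does not reprove this lemma at all: its proof is a citation of \cite[Theorem 3.29]{EKV}, observing that the sequence version stated here is exactly what is established there --- and the nontrivial argument in that reference is precisely the transfer you have left open. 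So the proposal, while correctly identifying the shape of the problem, proves only the easy reductions.
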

\begin{proof}
  By Theorem 3.29 in \cite{EKV}, the rank $\gamma$ defined 
  on $\mathcal{B}_1$ satisfies $\gamma(f + g) \le \omega^\lambda$ whenever 
  $\gamma(f), \gamma(g) \le \omega^\lambda$. But they 
  actually prove the statement of this lemma and derive the theorem 
  from this fact.
\end{proof}
\begin{lemma}
  \label{l:beta(g f) <= beta(f)}
  If $f : X \to \R$ is a function and $g : \R \to \R$ is a Lipschitz 
  map then $\beta(g \circ f) \le \beta(f)$. 
\end{lemma}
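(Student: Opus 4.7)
The plan is to exploit the Lipschitz condition to compare the oscillation derivatives of $g \circ f$ and $f$ directly. Let $L > 0$ be a Lipschitz constant for $g$, i.e.\ $|g(y_1) - g(y_2)| \le L|y_1 - y_2|$ for all $y_1, y_2 \in \R$.

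First I would observe the pointwise bound on the oscillation: for any closed $F \subseteq X$, any $x \in X$, and any open neighborhood $U$ of $x$,
\[
  \sup_{x_1, x_2 \in U \cap F} |g(f(x_1)) - g(f(x_2))|
  \le L \sup_{x_1, x_2 \in U \cap F} |f(x_1) - f(x_2)|.
\]
Taking infimum over $U$ gives $\omega(g \circ f, x, F) \le L \cdot \omega(f, x, F)$. Consequently, for every $\varepsilon > 0$,
\[
  D_{g \circ f, \varepsilon}(F) \subseteq D_{f, \varepsilon/L}(F),
\]
for every closed $F \subseteq X$.

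Next, since both maps $F \mapsto D_{g \circ f, \varepsilon}(F)$ and $F \mapsto D_{f, \varepsilon/L}(F)$ are monotone derivatives, a straightforward transfinite induction on $\theta$ (using monotonicity at successor steps and intersection at limit steps) yields
\[
  D_{g \circ f, \varepsilon}^\theta(X) \subseteq D_{f, \varepsilon/L}^\theta(X)
\]
for all ordinals $\theta$. Therefore $\rank(D_{g \circ f, \varepsilon}) \le \rank(D_{f, \varepsilon/L})$.

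Finally, taking the supremum over $\varepsilon > 0$ and substituting $\varepsilon' = \varepsilon/L$,
\[
  \beta(g \circ f) = \sup_{\varepsilon > 0} \rank(D_{g \circ f, \varepsilon})
  \le \sup_{\varepsilon > 0} \rank(D_{f, \varepsilon/L})
  = \sup_{\varepsilon' > 0} \rank(D_{f, \varepsilon'}) = \beta(f).
\]
There is no real obstacle here; the only small care needed is the transfinite induction verifying that $D_1(F) \subseteq D_2(F)$ for all closed $F$ (together with monotonicity) propagates to $D_1^\theta \subseteq D_2^\theta$, which is immediate from the recursive definition of the iterates.
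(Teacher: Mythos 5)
Your proof is correct and follows essentially the same route as the paper: both establish the pointwise oscillation bound $\omega(g \circ f, x, F) \le L\,\omega(f, x, F)$, deduce the comparison of the iterated derivatives (the paper states $\rank(D_{g \circ f, L\varepsilon}) \le \rank(D_{f, \varepsilon})$, which is your containment after rescaling $\varepsilon$), and conclude by taking the supremum over $\varepsilon$. You merely spell out the transfinite induction that the paper leaves as "one can easily see," and that verification is accurate.
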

\begin{proof}
  Let the Lipschitz constant of $g$ be $c$. 
  Then one can easily see that 
  $\omega(g \circ f, x, F) \le c \cdot \omega(f, x, F)$ for every 
  $x \in X$ and $F \subseteq X$ closed, hence 
  $\rank(D_{g \circ f, c \cdot \varepsilon}) \le 
  \rank(D_{f, \varepsilon})$, 
  showing that $\beta(g \circ f) \le \beta(f)$. 
\end{proof}

\section{The alternating sums of semi-Borel class $\xi$ functions}

Now we define the notion of an alternating sum of a transfinite 
sequence of semi-Borel class $\xi$ functions. It is the generalization of the alternating sum of USC functions defined by A.~S.~Kechris and A.~Louveau in \cite{KL}.

\begin{definition}
  Let $\lambda$ be a countable ordinal and let 
  $(f_\eta)_{\eta < \lambda} \in \dusb_\xi$. 
  The function 
  $\sideset{}{^*}\sum_{\eta < \theta} (-1)^\eta f_\eta$ is defined 
  inductively on $\theta \le \lambda$, by 
  \begin{equation*}
    \sideset{}{^*}\sum_{\eta < \theta + 1} (-1)^\eta f_\eta = 
    \sideset{}{^*}\sum_{\eta < \theta} (-1)^\eta f_\eta + 
      (-1)^\theta f_\theta,
  \end{equation*}
  where $(-1)^\theta = 1$ if $\theta$ is even and $-1$ if $\theta$ is 
  odd, and for limit $\theta \le \lambda$
  \begin{equation*}
    \sideset{}{^*}\sum_{\eta < \theta} (-1)^\eta f_\eta = 
    \sup\left\{\sideset{}{^*}\sum_{\eta < \zeta} (-1)^\eta f_\eta : 
    \text{$\zeta$ is even, } \zeta < \theta \right\}.
  \end{equation*}
  
  For a function $f$ if  
  $(f_\eta)_{\eta < \lambda} \in \dusb_\xi$ is a sequence with 
  $f = c + \sideset{}{^*}\sum_{\eta < \lambda} (-1)^\eta f_\eta$ for 
  some $c \in \R$,  then we 
  say that $f$ is the sum of a constant and the alternating 
  sequence $(f_\eta)_{\eta < \lambda}$ of length $\lambda$. 
  We use the notation 
  \begin{equation*}
    \len_\xi(f) = \inf\bigg\{\lambda : \exists
    (f_\eta)_{\eta < \lambda} \in \dusb_\xi, c \in \R \bigg(
    f = c + \sideset{}{^*}\sum_{\eta < \lambda} (-1)^\eta 
    f_\eta\bigg)\bigg\}, 
  \end{equation*}
  where we define $\len_\xi(f)$ to be $\omega_1$ if $f$ is not the sum 
  of a constant and an alternating sequence from $\dusb_\xi$.
\end{definition}

\begin{remark}
  \label{r:sup=limit}
  It is easy to prove by transfinite induction that for even ordinals 
  $\theta_1 \le \theta_2$ we have 
  \begin{equation}
    \label{e:monotony of alternating sum}
    \sideset{}{^*}\sum_{\eta < \theta_1} (-1)^\eta f_\eta \le 
    \sideset{}{^*}\sum_{\eta < \theta_2} (-1)^\eta f_\eta. 
  \end{equation}
  From this fact, for limit $\theta$ if $\theta_n \to \theta$, 
  $\theta_n < \theta$ even then 
  \begin{equation}
    \label{e:sup=limit}
    \sideset{}{^*}\sum_{\eta < \theta} (-1)^\eta f_\eta = 
    \lim_{n \to \infty}
    \sideset{}{^*}\sum_{\eta < \theta_n} (-1)^\eta f_\eta.
  \end{equation}
  We will use this fact to calculate 
  $\sideset{}{^*}\sum_{\eta < \theta} (-1)^\eta f_\eta$.
\end{remark}

\begin{remark}
  \label{r:first summand dominates}
  Let $(f_\eta)_{\eta < \lambda} \in \dusb_\xi$ and $\theta \le \lambda$ 
  with $\theta$ even. 
  We show by transfinite induction on $\zeta$ that for every 
  $\theta \le \zeta \le \lambda$ even, we have 
  \begin{equation}
    \label{e:alternating sum general}
    0 \le \sideset{}{^*}\sum_{\eta < \zeta} (-1)^\eta f_\eta - 
      \sideset{}{^*}\sum_{\eta < \theta} (-1)^\eta f_\eta \le 
    f_\theta - f_\zeta.
  \end{equation}
  For $\zeta + 2$ we have 
  \begin{equation*}
  \begin{split}
    0 \le \sideset{}{^*}\sum_{\eta < \zeta} (-1)^\eta f_\eta - 
      \sideset{}{^*}\sum_{\eta < \theta} (-1)^\eta f_\eta \le \\
    \sideset{}{^*}\sum_{\eta < \zeta} (-1)^\eta f_\eta + 
      f_\zeta - f_{\zeta + 1} - 
      \sideset{}{^*}\sum_{\eta < \theta} (-1)^\eta f_\eta \le \\
    f_\theta - f_\zeta + f_\zeta - f_{\zeta + 1} \le 
    f_\theta - f_{\zeta + 2}, 
  \end{split}
  \end{equation*}
  where the expression in the middle equals to 
  $$
    \sideset{}{^*}\sum_{\eta < \zeta + 2} (-1)^\eta f_\eta - 
      \sideset{}{^*}\sum_{\eta < \theta} (-1)^\eta f_\eta,
  $$ 
  proving the successor case. For limit $\zeta$, 
  \eqref{e:alternating sum general} is an easy consequence of 
  \eqref{e:sup=limit} and the monotonicity of the sequence 
  $(f_\eta)_{\eta < \lambda}$. 
  
  Now let $f = \sideset{}{^*}\sum_{\eta < \lambda} (-1)^\eta 
  f_\eta$. Since the alternating sum of a sequence does not change if we 
  append $0$ functions to it, we can suppose that $\lambda$ is 
  even. Hence we can substitute $\zeta = \lambda$ to get 
  \begin{equation}
    \label{e:alternating sum spec}
    0 \le f - \sideset{}{^*}\sum_{\eta < \theta} (-1)^\eta f_\eta \le 
    f_\theta,
  \end{equation}
  in particular, 
  \begin{equation}
    \label{e:f <= f_0}
    0 \le f \le f_0.
  \end{equation}
\end{remark}

\begin{theorem}
  \label{t:Baire 1 alternating}
  Let $f$ be a bounded Baire class 1 function. 
  Then $f \in \mathscr{B}_1^\lambda$ if and only if 
  $\len_1(f) \le \omega^\lambda$. 
\end{theorem}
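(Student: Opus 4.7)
The plan is to prove both implications by transfinite induction on $\lambda \ge 1$, adapting the strategy of Kechris and Louveau to the non-compact Polish setting and relying throughout on the equivalence of the ranks $\alpha$, $\beta$, $\gamma$ up to $\omega^\lambda$ recalled from \cite{EKV}.

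For the direction $\len_1(f) \le \omega^\lambda \Rightarrow f \in \mathscr{B}_1^\lambda$, I fix a representation $f = c + \sideset{}{^*}\sum_{\eta < \mu}(-1)^\eta f_\eta$ with $\mu \le \omega^\lambda$ and $(f_\eta)_{\eta < \mu} \in \dusb_1$, and aim to bound the convergence rank $\gamma(f) \le \omega^\lambda$. Since each bounded non-negative USC function $f_\eta$ on a metric space is a decreasing pointwise limit of continuous $g_{\eta,n} \downarrow f_\eta$, I can form alternating sums along cofinal subsequences of $\mu$ to produce continuous approximations $h_n \to f$ pointwise, justifying convergence on limit stages via Remark~\ref{r:first summand dominates} and \eqref{e:sup=limit}. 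The bound $\gamma((h_n)) \le \omega^\lambda$ is then proved by a secondary induction driven by Lemma~\ref{l:gamma is additive}: the base case $\lambda = 1$ combines countably many single-USC approximating sequences each of $\gamma$-rank at most $\omega$; the successor case $\lambda = \mu + 1$ partitions $\omega^{\mu+1} = \omega^\mu \cdot \omega$ into $\omega$ consecutive blocks of length $\omega^\mu$ and applies the inductive hypothesis blockwise; the limit case exhausts $\omega^\lambda$ by an increasing subsequence.

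For the direction $f \in \mathscr{B}_1^\lambda \Rightarrow \len_1(f) \le \omega^\lambda$, I build the alternating USC sequence by recursion on the derivative structure granted by $\alpha(f) \le \omega^\lambda$. The base case $\lambda = 1$ is the Polish version of the Kechris-Louveau DBSC theorem: using rational thresholds $p < q$ and the finite-derivative-rank data of $D_{\{f \le p\},\{f \ge q\}}$, one peels off bounded USC pieces in the appropriate signed order to produce a length-$\le\omega$ alternating USC sum equal to $f - c$ where $c = \inf f$. The successor step $\lambda = \mu + 1$ applies the inductive hypothesis to the restriction of $f$ onto each $\omega^\mu$-th derivative level, yielding length-$\omega^\mu$ decompositions that concatenate into a sequence of length $\omega^\mu \cdot \omega = \omega^{\mu+1}$. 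The limit case concatenates sequences of lengths $\omega^{\lambda_n}$ for $\lambda_n \nearrow \lambda$.

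The main obstacle is the reverse direction on a non-compact Polish space, where three features of the compact case become unavailable. First, a bounded Baire class $1$ function need not attain $\min f$, so the constant $c$ must be taken to be $\inf f$ and it must be separately verified that $f - c$ admits a \emph{non-negative} USC decomposition. Second, a decreasing sequence of non-negative USC functions converging pointwise to $0$ need not do so uniformly, so the clause ``$f_\eta \to 0$ at limit stages'' in the definition of $\dusb_1$ cannot be inherited for free; the peeling construction will need to be truncated at carefully chosen bounds (typically those supplied by Remark~\ref{r:first summand dominates}) to force this convergence. Third, the rank of a characteristic function need not be a successor ordinal on a non-compact Polish space, so the peeling procedure can stall at a limit derivative stage and must be glued across; Lemmas \ref{l:alpha = beta for char} and \ref{l:beta(g f) <= beta(f)} will serve to convert between $\alpha$- and $\beta$-rank bounds as the construction crosses such stages.
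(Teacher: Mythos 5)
Your proposal diverges from the paper's argument in both directions, and in each direction there is a genuine gap. For $\len_1(f)\le\omega^\lambda \Rightarrow f\in\mathscr{B}_1^\lambda$, the blockwise induction cannot be run as you describe: the block $(f_{\omega^\mu\cdot k+\eta})_{\eta<\omega^\mu}$ extracted from a $\dusb_1$ sequence of length $\omega^{\mu+1}$ is \emph{not} itself in $\dusb_1$, since at the end of the block it tends to $f_{\omega^\mu\cdot(k+1)}$ rather than to $0$, so the inductive hypothesis simply does not apply to blocks. This is exactly the obstruction the paper resolves by proving two statements simultaneously, \eqref{e:tending to zero} and \eqref{e:not tending to zero}, the latter for the relaxed class $\dusb_1'$ (no convergence to $0$) at the cost of the shift $\lambda\mapsto\lambda+1$. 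Moreover, Lemma~\ref{l:gamma is additive} combines only two (hence finitely many) summand sequences: it cannot assemble $\omega$ blocks of rank $\omega^\mu$ into a bound $\omega^{\mu+1}$ (a multiplicative, not additive, statement), nor combine countably many pieces in your base case, since the $f_\eta$ decrease only pointwise and need not decay in sup norm. The multiplicative step genuinely needs a derivative argument; the paper uses Lemma~\ref{l:derivatives of functions} together with the truncation sets $F_n=\{g\ge n\varepsilon/12\}$, where $g=\inf_\eta f_\eta$ is USC, to get $D_{f,\varepsilon}^{\omega^\mu}(F_n)\subseteq F_{n+1}$ and hence $D_{f,\varepsilon}^{\omega^{\mu+1}}(X)=\emptyset$. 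You correctly note that Dini's theorem fails on noncompact spaces, but you attach that difficulty to the other direction; it is here that it bites, and the $F_n$ device is its cure. Finally, your continuous approximants $h_n$ are not available as described: a partial alternating sum past stage $\omega$ is an infinite sum of USC functions and is not continuous, and no uniform-convergence mechanism is present to make a diagonal choice continuous; the paper avoids this entirely by estimating the oscillation rank $\beta$ of $f$ directly against the partial sums $f^k$.

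For $f\in\mathscr{B}_1^\lambda \Rightarrow \len_1(f)\le\omega^\lambda$, your peeling scheme operates, at best, on the separation data of a single rational pair $p<q$; a Baire class $1$ function carries countably many unrelated derivatives $D_{\{f\le p\},\{f\ge q\}}$ with no common hierarchy, and you never say how to merge them into one decreasing USC sequence representing $f$ — this is the crux. The paper's resolution is structural rather than recursive on derivative levels, and needs no induction on $\lambda$ in this direction: for a characteristic function the definition of $\mathscr{B}_1^\lambda$ already provides a continuous decreasing sequence $(F_\eta)_{\eta<\omega^\lambda}$ of closed sets with empty intersection, and $f_\eta=\chi_{F_\eta}$ works outright (so limit-valued ranks of characteristic functions cause no trouble here, contrary to your third concern); step functions follow by linear combination with the norm control \eqref{e:||f_eta|| <= ||f|| for step functions}; and a general $f$ is written, via \cite[3.40]{EKV} and Lemma~\ref{l:sum of step functions}, as $\inf f+\sum_k g^k$ with step functions $\|g^k\|\le 2^{-k}$, whose decompositions are summed termwise, the identity at limit stages being checked by dominated convergence. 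It is this norm decay — not a truncation via Remark~\ref{r:first summand dominates} — that secures both non-negativity and $f_\eta\to 0$. Your successor step would additionally require extending USC decompositions from closed derivative levels to all of $X$ while preserving monotonicity and the alternating-sum identity, which is not addressed. As it stands, neither implication is proved.
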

\begin{remark}
  A straightforward consequence of this theorem is that every 
  bounded Baire class 1 function can be written as the sum of a constant 
  and an alternating sequence from $\dusb_1$ (as $\alpha_1(f) < \omega_1$ 
  for every Baire class 1 function $f$, see \cite[3.15]{EKV}). 
  For the other direction, 
  that if $f$ can be written in this form then $f$ is a bounded 
  Baire class 1 function, see \cite{EV}. 
\end{remark}
\begin{proof}[Proof of Theorem \ref{t:Baire 1 alternating}]
  It is easy to see that it is enough to prove the theorem for 
  non-negative functions, since for any constant $c$, 
  $f \in \mathscr{B}_1^\lambda \Leftrightarrow f + c \in 
  \mathscr{B}_1^\lambda$ and $\len_1(f + c) = \len_1(f)$. 
  We first show that if $f \in \mathscr{B}_1^\lambda$ then 
  $\len_1(f) \le \omega^\lambda$. 

  Let $f \in \mathscr{B}_1^{\lambda}$ be a characteristic 
  function, i.e., $f = \chi_A$ for some $A \subseteq X$. Using the 
  definition of $\mathscr{B}_1^{\lambda}$, we can separate 
  $\{f \ge 1\} = A$ and $\{f \le 0\} = A^c$ with an appropriate sequence, 
  hence $A$ can be written as 
  $$
    A = \bigcup_{\begin{subarray}{c}
      \eta < \omega^{\lambda} \\ \text{$\eta$ is even}
    \end{subarray}} F_\eta \setminus F_{\eta + 1},
  $$
  where $(F_\eta)_{\eta < \omega^{\lambda}}$ is a decreasing, 
  continuous sequence of closed sets with $F_0 = X$ and 
  $\bigcap_{\eta < \omega^{\lambda}} F_\eta = \emptyset$.

  Now let $f_\eta = \chi_{F_\eta}$. It is easy 
  to see that $(f_\eta)_{\eta < \omega^\lambda}$ is a decreasing sequence 
  of non-negative, bounded USC functions with $f_\eta \to 0$ as 
  $\eta \to \omega^\lambda$. From this 
  $(f_\eta)_{\eta < \omega^\lambda} \in \dusb_1$, hence to prove that 
  $\len_1(f) \le \omega^\lambda$, it is enough to 
  prove that 
  $f = \sideset{}{^*}\sum_{\eta < \omega^\lambda} (-1)^\eta f_\eta$. 
  We do this by proving that for every 
  $\theta \le \omega^\lambda$ even we have 
  \begin{equation*}
    \sideset{}{^*}\sum_{\eta < \theta} (-1)^\eta f_\eta = 
    \chi_{\bigcup_{\begin{subarray}{c} \eta < \theta \\ 
    \text{$\eta$ even}\end{subarray}} F_\eta \setminus F_{\eta + 1}}.
  \end{equation*}
  For $\theta = 0$ this is obvious. Suppose this holds for $\theta$ then 
  \begin{equation*}
  \begin{split}
    \sideset{}{^*}\sum_{\eta < \theta + 2} (-1)^\eta f_\eta = 
    \sideset{}{^*}\sum_{\eta < \theta} (-1)^\eta f_\eta + 
      f_\theta - f_{\theta + 1} = \\
    \chi_{\bigcup_{\begin{subarray}{c} \eta < \theta \\   
      \text{$\eta$ even}\end{subarray}} F_\eta 
        \setminus F_{\eta + 1}} + 
      \chi_{F_\theta} - \chi_{F_{\theta + 1}} = 
    \chi_{\bigcup_{\begin{subarray}{c} \eta < \theta + 2 \\   
      \text{$\eta$ even}\end{subarray}} F_\eta \setminus F_{\eta + 1}}.    
  \end{split}
  \end{equation*}
  For limit $\theta$ let $\theta_n \to \theta$, $\theta_n < \theta$ even 
  then 
  \begin{equation*}
  \begin{split}
    \sideset{}{^*}\sum_{\eta < \theta} (-1)^\eta f_\eta = 
    \lim_{n \to \infty}
      \sideset{}{^*}\sum_{\eta < \theta_n} (-1)^\eta f_\eta =
    \lim_{n \to \infty}
      \chi_{\bigcup_{\begin{subarray}{c} \eta < \theta_n \\
      \text{$\eta$ even}\end{subarray}} F_\eta \setminus F_{\eta + 1}}
    = \chi_{\bigcup_{\begin{subarray}{c} \eta < \theta \\
      \text{$\eta$ even}\end{subarray}} F_\eta \setminus F_{\eta + 1}},
  \end{split}
  \end{equation*}
  proving $\len_1(f) \le \omega^\lambda$ for the characteristic 
  function $f \in \mathscr{B}_1^\lambda$. 
  
  Now let $f \in \mathscr{B}_1^\lambda$ be a non-negative step 
  function, that is, a linear combination 
  of characteristic functions. Such a function can be written as 
  $f = \sum_{i = 1}^n c_i \chi_{A_i}$ where the $c_i$'s 
  are distinct, non-negative real numbers and the $A_i$'s 
  form a partition of $X$ with $A_i \in \boldsymbol{\Delta}^0_2$ for 
  each $i$. By the above statement, each $\chi_{A_i}$ can be written as 
  $\chi_{A_i} = \sideset{}{^*}\sum_{\eta < \omega^\lambda} (-1)^\eta f^i_\eta$, 
  where $(f^i_\eta)_{\eta < \omega^\lambda} \in \dusb_1$, since 
  $\alpha_1(\chi_{A_i}) \le \omega^\lambda$ (see \cite[3.38]{EKV} and 
  \cite[3.14]{EKV}). 
  Now let $f_\eta = \sum_{i = 1}^n c_i \cdot f^i_\eta$. 
  It is easy to see that 
  $(f_\eta)_{\eta < \omega^\lambda} \in \dusb_1$ and 
  $f = \sideset{}{^*}\sum_{\eta < \omega^\lambda} (-1)^\eta f_\eta$, 
  showing that $\len_1(f) \le \omega^\lambda$ for step functions 
  $f \in \mathscr{B}_1^\lambda$. 
  Moreover, this construction shows that the $f_\eta$'s can be chosen 
  in such a way that 
  \begin{equation}
    \label{e:||f_eta|| <= ||f|| for step functions}
    \|f_\eta\| \le \|f\|.
  \end{equation}
  
  Now we turn to the case of arbitrary non-negative bounded functions. 
  \begin{lemma}
    \label{l:sum of step functions}
    If $f \in \mathscr{B}_1^\lambda$ then there exists a sequence 
    $(g^k)_{k \in \N}$ of non-negative step functions 
    $g^k \in \mathscr{B}_1^\lambda$ such that 
    $\inf f + \sum_{k \in \N} g^k = f$ and 
    $\|g^k\| \le \frac{1}{2^k}$ for $k \ge 1$. 
  \end{lemma}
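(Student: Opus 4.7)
The plan is to produce the $g^k$ by a dyadic subdivision of the range of $f$. First I would subtract $\inf f$ so as to assume $f \ge 0$ with $\inf f = 0$; the class $\mathscr{B}_1^\lambda$ is invariant under additive constants (one sees this either directly from the definition of $\alpha_1$ or via Lemma \ref{l:beta(g f) <= beta(f)}), and the conclusion is invariant too. For $k \ge 0$ set
\[
  s_k(x) \;=\; 2^{-(k+1)}\bigl\lfloor 2^{k+1} f(x)\bigr\rfloor,
\]
a bounded non-negative step function satisfying $s_k \le f < s_k + 2^{-(k+1)}$ and $s_{k-1} \le s_k$ (the latter because $\lfloor 2y\rfloor \ge 2\lfloor y\rfloor$ for all $y \in \R$). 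Define $g^0 := s_0$ and $g^k := s_k - s_{k-1}$ for $k \ge 1$; each $g^k$ is a non-negative step function, the chain $s_k - s_{k-1} \le f - s_{k-1} < 2^{-k}$ yields $\|g^k\| \le 2^{-k}$, and the telescoping identity $s_k = \sum_{j \le k} g^j$ combined with uniform convergence $s_k \to f$ delivers $\inf f + \sum_k g^k = f$.

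The substantive content is to verify $g^k \in \mathscr{B}_1^\lambda$. Since $\mathscr{B}_1^\lambda$ is closed under finite bounded linear combinations (combine Lemma \ref{l:gamma is additive} with the equivalence $f \in \mathscr{B}_1^\lambda \Leftrightarrow \gamma(f) \le \omega^\lambda$ recorded in the text, and use Lemma \ref{l:beta(g f) <= beta(f)} for scalar multiplication), it is enough to show $s_k \in \mathscr{B}_1^\lambda$. Expanding
\[
  s_k \;=\; 2^{-(k+1)} \sum_{j=1}^{N_k} \chi_{\{f \ge j \cdot 2^{-(k+1)}\}},
\]
a finite sum since $f$ is bounded, the same closure reduces the lemma to showing that each indicator $\chi_{\{f \ge c\}}$ of a dyadic upper level set belongs to $\mathscr{B}_1^\lambda$.

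This final reduction is the main obstacle. The natural route is to approximate $\chi_{[c,\infty)} : \R \to \R$ from below by bounded Lipschitz maps $h_n$ (e.g.\ $h_n = 1$ on $[c,\infty)$, $h_n = 0$ on $(-\infty, c - 1/n]$, and affine in between), so $h_n \circ f \to \chi_{\{f \ge c\}}$ pointwise and monotonically; by Lemma \ref{l:beta(g f) <= beta(f)} each $\beta(h_n \circ f) \le \beta(f) \le \omega^\lambda$. Transferring this bound to the limit is delicate because monotone bounded pointwise limits of $\mathscr{B}_1^\lambda$ functions need not remain in $\mathscr{B}_1^\lambda$, so one must carry out a direct derivative comparison bounding the convergence-rank $\gamma((h_n \circ f)_n)$ by $\beta(f)$, diagonalise against continuous approximants converging to each $h_n \circ f$ to obtain a continuous sequence converging pointwise to $\chi_{\{f \ge c\}}$ with convergence rank $\le \omega^\lambda$, and finally invoke Lemma \ref{l:alpha = beta for char} to read this off as $\alpha_1(\chi_{\{f \ge c\}}) \le \omega^\lambda$. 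One last subtlety: $\chi_{\{f \ge c\}}$ is Baire class 1 only for those $c$ where $\{f = c\}$ is $F_\sigma$, and the set of exceptional $c$ is countable, so one perturbs the dyadic grid to avoid them, which affects neither the bound $\|g^k\| \le 2^{-k}$ nor the uniform convergence $s_k \to f$.
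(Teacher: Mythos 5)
Your outer skeleton is fine and in fact parallels the paper: the paper also takes an increasing sequence of step functions $f^k$ in $\mathscr{B}_1^\lambda$ converging uniformly to $f$ (made increasing via a truncation handled by Lemma \ref{l:beta(g f) <= beta(f)}) and telescopes, using \cite[3.29]{EKV} to keep the differences in the class. The decisive difference is where the step approximants come from. The paper imports them from \cite[3.40]{EKV}, whose pieces are built from the $\boldsymbol{\Delta}^0_2$ transfinite-difference separators witnessing $\alpha(f) \le \omega^\lambda$ --- deliberately \emph{not} from level sets of $f$. Your final reduction, that $\chi_{\{f \ge c\}} \in \mathscr{B}_1^\lambda$ for all $c$ outside a countable set, is not just unproved: it is false, so the level-set route cannot be saved by perturbing the grid. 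Sketch for $\lambda = 1$: let $X \cong \omega^\omega + 1$ and, for every $k$ and every dyadic $d \in 2^{-k}\Z \cap (0,1]$, place below the top point a point $x_{k,d}$ of Cantor--Bendixson rank $k$ with $f(x_{k,d}) = d$ (together with a twin of value $d - 2^{-k}$), the family converging to the top as $k \to \infty$, where the clopen cone below $x_{k,d}$ carries at each rank $j < k$ points of both values $d$ and $d - 2^{-j}$, each cofinal below every higher-rank point of the cone. Given any $c \in (0,1)$ and $k$, take $d$ the least level-$k$ dyadic with $d \ge c$; then $d - 2^{-j} < c \le d$ for all $j \le k$, so the cone straddles $c$ at every rank, $x_{k,d}$ and its twin survive $k$ iterations of $F \mapsto \overline{F \cap \{f \ge c\}} \cap \overline{F \cap \{f < c\}}$, and the top point lies in every finite iterate: $\alpha(\chi_{\{f \ge c\}}) \ge \omega + 1 > \omega$ for \emph{every} $c \in (0,1)$. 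Meanwhile for a gapped pair $p < q$, inside each cone either $\{f \ge q\}$ is empty (when $d < q$) or $\{f \le p\}$ meets only ranks $j$ with $2^{-j} \ge d - p \ge q - p$, so each pair-derivative dies in finitely many steps uniformly over cones, giving $\alpha(f) \le \omega$, i.e.\ $f \in \mathscr{B}_1^1$ (everything here is $\boldsymbol{\Delta}^0_2$ since the space is countable; add a clopen copy of the Cantor set with $f \equiv 0$ to meet the paper's standing uncountability assumption). Since by \cite[3.38]{EKV} and \cite[3.29]{EKV} membership of your $s_k$ in $\mathscr{B}_1^\lambda$ would force the indicators of its pieces into $\mathscr{B}_1^\lambda$, for this $f$ your $s_k$, hence your $g^k$, genuinely leave the class.

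The repair you sketch --- a derivative comparison bounding $\gamma((h_n \circ f)_{n \in \N})$ by $\beta(f)$ --- is likewise unavailable, since it would prove the false statement above. One sees the failure directly: at a point of $\{f \ge c\}$ accumulated by points $y$ with $f(y) < c$, $f(y) \to c$, for every $N$ there are nearby $y$ with $h_N(f(y))$ close to $1$ while $h_m(f(y)) = 0$ for large $m$; so $D_{(h_n \circ f)_{n \in \N}, \varepsilon}$ retains precisely the points retained by the cut derivative, and the convergence rank of your monotone sequence is essentially the cut rank, not $\beta(f)$. (Your side claim that only countably many $c$ make $\chi_{\{f \ge c\}}$ fail to be Baire class 1 is also asserted without proof, though that is secondary.) The correct completion is the paper's: obtain the step approximants from \cite[3.40]{EKV} and then run exactly your telescoping argument.
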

  \begin{proof}
    It is enough to show that there exists such a sequence with 
    $\sum_{k \in \N} g^k = f$ for a non-negative function $f$, 
    since $f - \inf f \in \mathscr{B}_1^\lambda$ is always 
    non-negative. 
    
    So let $f \in \mathscr{B}_1^\lambda$ be non-negative. 
    Then there exists a sequence of step functions 
    $(f^k)_{k \in \N}$ converging uniformly to $f$ with 
    $f^k \in \mathscr{B}_1^\lambda$ for every $k \in \N$ 
    (see \cite[3.40]{EKV}). By taking a 
    subsequence, we can suppose that 
    $\|f^k - f\| \le \frac{1}{2^{k + 5}}$. 
    By substituting $f^k$ with $\max\{f^k - \frac{1}{2^{k + 3}}, 0\}$, 
    we can suppose moreover that $(f^k)_{k \in \N}$ is an 
    increasing sequence of non-negative functions now satisfying 
    $\|f^k - f\| \le \frac{1}{2^{k + 2}}$, and 
    using Lemma \ref{l:beta(g f) <= beta(f)}, we still have 
    $f^k \in \mathscr{B}_1^\lambda$.
    
    Let $g^0 = f^0$ and for $k \ge 1$ let $g^k = f^k - f^{k - 1}$. 
    Then $g^k \ge 0$, $\|g^k\| \le \frac{1}{2^k}$ for $k \ge 1$ and 
    $\sum_{k \in \N} g^k = f$. 
    By \cite[3.29]{EKV}, $g^k \in \mathscr{B}_1^\lambda$, proving 
    the lemma. 
  \end{proof}
  
  Now let $(g^k)_{k \in \N}$ be the sequence given by the lemma and 
  substitute $g^0$ with $g^0 + \inf f$. 
  Then $g^0$ remains non-negative and now 
  $\sum_{k \in \N} g^k = f$. Since $g^k \in \mathscr{B}_1^\lambda$ 
  is a step function for each $k$, we can write 
  $$
    g^k = \sideset{}{^*}\sum_{\eta < \omega^\lambda} (-1)^\eta g^k_\eta, 
  $$ 
  where $(g^k_\eta)_{\eta < \omega^\lambda} \in \dusb_1$ and each 
  $g^k_\eta$ is chosen to satisfy 
  \eqref{e:||f_eta|| <= ||f|| for step functions}, 
  hence $\|g^k_\eta\| \le \|g^k\|$. 
  
  For $\eta < \omega^\lambda$ let 
  $$
    f_\eta = \sum_{k \in \N} g^k_\eta.
  $$ 
  We claim that $(f_\eta)_{\eta < \omega^\lambda} \in \dusb_1$ and 
  $$
    f = \sideset{}{^*}\sum_{\eta < \omega^\lambda} (-1)^\eta f_\eta.
  $$ 
  It is enough to show these claims to finish the proof of the implication  
  $f \in \mathscr{B}_1^\lambda \Rightarrow \len_1(f) \le \omega^\lambda$. 
  
  Since $\|g^k_\eta\| \le \frac{1}{2^k}$ for $k \ge 1$, 
  $\|g^0_\eta\| \le \|g^0\|$ and $(g^k_\eta)_{\eta < \omega^\lambda} \in 
  \dusb_1$, the sequence $(f_\eta)_{\eta < \omega^\lambda}$ is 
  a non-negative, bounded, decreasing sequence of USC functions, as 
  the finite sum and uniform limit of USC functions is USC. 
  
  Now we show that $f_\eta \to 0$ as $\eta \to \omega^\lambda$. 
  Let $x \in X$ and $\varepsilon > 0$ be fixed. There exists a 
  $k_0$ with $\sum_{k \ge k_0} g^k_\eta(x) \le 
  \sum_{k \ge k_0} \frac{1}{2^k} < \frac{\varepsilon}{2}$. 
  For this $k_0$, we can find an ordinal $\lambda_0 < \omega^\lambda$ 
  such that for every $\lambda_0 \le \eta < \omega^\lambda$ and 
  $k < k_0$, $g^k_\eta(x) < \frac{\varepsilon}{2k_0}$, 
  since $g^k_\eta \to 0$ as $\eta \to \omega^\lambda$ for each $k$. 
  Hence for every $\lambda_0 \le \eta < \omega^\lambda$ we have 
  $f_\eta(x) \le \varepsilon$, showing that $f_\eta \to 0$ as 
  $\eta \to \omega^\lambda$, thus proving 
  $(f_\eta)_{\eta < \omega^\lambda} \in \dusb_1$. 
  
  To show that 
  $f = \sideset{}{^*}\sum_{\eta < \omega^\lambda} (-1)^\eta f_\eta$, 
  we prove by transfinite induction that for every 
  $\theta \le \omega^\lambda$, 
  \begin{equation*}
    \sideset{}{^*}\sum_{\eta < \theta} (-1)^\eta f_\eta = 
    \sum_{k \in \N} 
    \sideset{}{^*}\sum_{\eta < \theta} (-1)^\eta g^k_\eta.
  \end{equation*}
  Suppose this holds for $\theta$, then
  \begin{equation*}
  \begin{split}
    \sideset{}{^*}\sum_{\eta < \theta + 1} (-1)^\eta f_\eta = 
    \sideset{}{^*}\sum_{\eta < \theta} (-1)^\eta f_\eta + 
      (-1)^\theta f_\theta = \\
    \sum_{k \in \N} 
      \sideset{}{^*}\sum_{\eta < \theta} (-1)^\eta g^k_\eta + 
      \sum_{k \in \N} (-1)^\theta g^k_\theta = 
    \sum_{k \in \N} 
      \; \sideset{}{^*}\sum_{\eta < \theta + 1} (-1)^\eta g^k_\eta.
  \end{split}
  \end{equation*}
  And for limit $\theta$ let $\theta_n \to \theta$, 
  $\theta_n < \theta$ even then 
  \begin{equation*}
  \begin{split}
    \sideset{}{^*}\sum_{\eta < \theta} (-1)^\eta f_\eta = 
    \lim_{n \to \infty}
      \sideset{}{^*}\sum_{\eta < \theta_n} (-1)^\eta f_\eta = 
    \lim_{n \to \infty}
    \sum_{k \in \N} 
      \sideset{}{^*}\sum_{\eta < \theta_n} (-1)^\eta g^k_\eta = \\
    \sum_{k \in \N} \lim_{n \to \infty}
      \sideset{}{^*}\sum_{\eta < \theta_n} (-1)^\eta g^k_\eta = 
    \sum_{k \in \N} 
      \sideset{}{^*}\sum_{\eta < \theta} (-1)^\eta g^k_\eta, 
  \end{split}
  \end{equation*}
  where we used the dominated convergence theorem to interchange the 
  operators $\lim$ and $\sum$: for a fixed $x \in X$ let 
  $$
    h_n(k) = \sideset{}{^*}\sum_{\eta < \theta_n} (-1)^\eta 
    g^k_\eta(x) \text{\;\; and \;\;} 
    h(k) = \sideset{}{^*}\sum_{\eta < \theta} (-1)^\eta g^k_\eta(x).
  $$
  Then $h_n(k)$ converges to $h(k)$ for every $k$,
  and for every $n \in \N$ by \eqref{e:monotony of alternating sum} and 
  \eqref{e:f <= f_0} we have $|h_n(k)| \le H(k)$, where 
  $H(k) = \|g^k_0\|$. The function $H(k)$ is summable, since 
  $H(k) \le \frac{1}{2^k}$ for $k \ge 1$, hence we can apply 
  the dominated convergence theorem to get that 
  $\lim_{n \to \infty} \sum_{k \in \N} h_n(k) = \sum_{k \in \N} 
  \lim_{n \to \infty} h_n(k)$.
  This finishes the proof of $\len_1(f) \le \omega^\lambda$ for a 
  function $f \in \mathscr{B}_1^\lambda$. 
  
  Now we prove the following two statements by transfinite induction 
  on $\lambda$: 
  \begin{equation}
    \label{e:tending to zero}
    \text{if } 
      f = \sideset{}{^*}\sum_{\eta < \omega^\lambda} (-1)^\eta f_\eta 
      \text{ with } (f_\eta)_{\eta < \omega^\lambda} \in \dusb_1 
      \text{ then } f \in \mathscr{B}_1^\lambda
  \end{equation}
  \begin{equation}
    \label{e:not tending to zero}
    \text{if } 
      f = \sideset{}{^*}\sum_{\eta < \omega^\lambda} (-1)^\eta f_\eta 
      \text{ with } (f_\eta)_{\eta < \omega^\lambda} \in \dusb_1'
      \text{ then } f \in \mathscr{B}_1^{\lambda + 1}, 
  \end{equation}
  where $\dusb_1'$ consists of decreasing, transfinite sequences of 
  bounded, non-negative USC functions of countable length, i.e., we 
  do not assume that $f_\eta \to 0$ as $\eta \to \omega^\lambda$ for 
  the sequence $(f_\eta)_{\eta < \omega^\lambda} \in \dusb_1'$. 
  It is easy to see that \eqref{e:tending to zero} yields the second 
  part of the theorem, hence it is enough to prove these two statements.
  
  First we prove \eqref{e:tending to zero} for $\lambda + 1$ while 
  supposing \eqref{e:tending to zero} and \eqref{e:not tending to zero} 
  for $\lambda$. So let 
  $f = \sideset{}{^*}\sum_{\eta < \omega^{\lambda + 1}} (-1)^\eta 
  f_\eta$, where $(f_\eta)_{\eta < \omega^{\lambda + 1}} \in \dusb_1$. 
  Let $f^k = \sideset{}{^*}\sum_{\eta < \omega^{\lambda}\cdot k} 
  (-1)^\eta f_\eta$, by \eqref{e:sup=limit} we have $f^k \to f$. 
  \begin{claim}
    $\beta(f^k) \le \omega^{\lambda + 1}$.
  \end{claim}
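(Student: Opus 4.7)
The plan is to exploit that $f^k$ is a finite sum of alternating sums, each of length $\omega^\lambda$, and then invoke the inductive hypothesis \eqref{e:not tending to zero} together with the additivity of the $\gamma$-rank.

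First, for each $i < k$ I would define
\[
  g^i = \sideset{}{^*}\sum_{\eta < \omega^\lambda} (-1)^\eta f_{\omega^\lambda \cdot i + \eta},
\]
and prove by transfinite induction on $j \le k$ the identity
\[
  \sideset{}{^*}\sum_{\eta < \omega^\lambda \cdot j} (-1)^\eta f_\eta = \sum_{i < j} g^i.
\]
Since $\lambda \ge 1$, each $\omega^\lambda \cdot i$ is a limit ordinal and hence even, so $(-1)^{\omega^\lambda \cdot i + \eta} = (-1)^\eta$; this sign-parity observation is the essential point that makes the shifted alternating sum inside each block behave like a fresh one. The successor step splits the sum at $\omega^\lambda \cdot j$, the limit step follows from \eqref{e:sup=limit} together with the monotonicity \eqref{e:monotony of alternating sum}. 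Setting $j = k$ gives $f^k = \sum_{i < k} g^i$.

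Next, observe that each shifted subsequence $(f_{\omega^\lambda \cdot i + \eta})_{\eta < \omega^\lambda}$ is a non-negative, bounded, decreasing sequence of USC functions of length $\omega^\lambda$, but in general it need not tend to zero, so it belongs to $\dusb_1'$ rather than $\dusb_1$. This is exactly the situation covered by \eqref{e:not tending to zero}: applying the inductive hypothesis at $\lambda$ yields $g^i \in \mathscr{B}_1^{\lambda + 1}$, and by the equivalence $f \in \mathscr{B}_1^{\lambda + 1} \Leftrightarrow \gamma(f) \le \omega^{\lambda + 1}$ from \cite[3.14]{EKV} and \cite[3.35]{EKV}, we get $\gamma(g^i) \le \omega^{\lambda + 1}$.

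Finally, choosing for each $i$ a continuous sequence witnessing $\gamma(g^i) \le \omega^{\lambda + 1}$ and applying Lemma \ref{l:gamma is additive} finitely many times, the sum of these sequences converges pointwise to $f^k$ and has $\gamma$-rank at most $\omega^{\lambda + 1}$, whence $\gamma(f^k) \le \omega^{\lambda + 1}$ and therefore $\beta(f^k) \le \omega^{\lambda + 1}$ by the same equivalence. The main subtlety I expect is the first step: one must verify carefully that the suprema defining the alternating sum decompose cleanly across the block boundaries $\omega^\lambda \cdot j$ at limit stages, but the parity observation above reduces this to a routine transfinite induction.
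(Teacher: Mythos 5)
Your proposal is correct and matches the paper's argument in all essentials: the paper likewise splits $f^k$ into blocks of length $\omega^\lambda$ (doing so one block at a time, by induction on $k$, writing $g^k = f^{k+1} - f^k = \sideset{}{^*}\sum_{\eta < \omega^\lambda} (-1)^\eta f_{\omega^\lambda \cdot k + \eta}$), observes that each block sequence lies in $\dusb_1'$ so that \eqref{e:not tending to zero} at $\lambda$ gives membership in $\mathscr{B}_1^{\lambda + 1}$, and then combines the blocks by the additivity result \cite[3.29]{EKV} applied directly to $\beta$, where you instead route through $\gamma$-witnesses and the rank equivalences --- an immaterial difference. The one caveat, shared by the paper's own proof, is that the parity identity $(-1)^{\omega^\lambda \cdot i + \eta} = (-1)^\eta$ requires $\lambda \ge 1$; for $\lambda = 0$ a block contributes $\pm f_i$, which is harmless since the ranks are invariant under negation.
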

  \begin{proof}
    We prove this by induction on $k$. For $k = 1$ this is 
    \eqref{e:not tending to zero} for $\lambda$ as the sequence 
    $(f_\eta)_{\eta < \omega^\lambda}$ is in $\dusb_1'$. 
    For $k + 1$ we have 
    $f^{k + 1} = f^k + g^k$, where $g^k = f^{k + 1} - f^k$. 
    We have $g^k = \sideset{}{^*}\sum_{\eta < \omega^\lambda} 
    (-1)^\eta f'_\eta$, where 
    $f'_\eta = f_{\omega^\lambda \cdot k + \eta}$ with 
    $(f'_\eta)_{\eta < \omega^\lambda} \in \dusb'_1$. 
    Now using \eqref{e:not tending to zero} for $g^k$ we have 
    $g^k \in \mathscr{B}_1^{\lambda + 1}$, hence 
    $f^{k + 1} = f^k + g^k \in \mathscr{B}_1^{\lambda + 1}$ 
    using \cite[3.29]{EKV} to show that 
    $\beta(f^k), \beta(g^k) \le \omega^{\lambda + 1}$ implies 
    $\beta(f^{k + 1}) \le \omega^{\lambda + 1}$.
  \end{proof}
  
  Now we prove $f \in \mathscr{B}_1^{\lambda + 1}$ by showing 
  that $\beta(f) \le \omega^{\lambda + 1}$. 
  Let $x \in X$, it is enough to prove that 
  $x \not \in D_{f, \varepsilon}^{\omega^{\lambda + 1}}(X)$ for every 
  $\varepsilon > 0$. 
  By \eqref{e:alternating sum spec} we have 
  $0 \le f - f^k \le f_{\omega^\lambda \cdot k}$, hence there exists a 
  $k$ such that $|f(x) - f^k(x)| \le f_{\omega^\lambda \cdot k}(x) \le 
  \frac{\varepsilon}{5}$. Since $f_{\omega^\lambda \cdot k}$ is USC, 
  we have an open set $x \in U$ such that $|f(y) - f^k(y)| \le 
  f_{\omega^\lambda \cdot k}(y) \le \frac{\varepsilon}{4}$ for every 
  $y \in U$. Now we need the following lemma. 
  \begin{lemma}
    \label{l:derivatives of functions}
    If $f$ and $g$ are two Baire class 1 functions, $U$ is open and 
    $F$ is closed with 
    $|f(y) - g(y)| \le \frac{\varepsilon}{4}$ for every $y \in F \cap U$ 
    then for every $\eta < \omega_1$,
    $$
      D_{f, \varepsilon}^\eta(F) \cap U \subseteq 
      D_{g, \frac{\varepsilon}{4}}^\eta(F) \cap U.
    $$
  \end{lemma}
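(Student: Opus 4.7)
The plan is a transfinite induction on $\eta$. The base case $\eta = 0$ is trivial since both iterated derivatives reduce to $F$, and the limit case is inherited termwise from the induction hypothesis because $D^{\theta}_{f,\varepsilon}(F) = \bigcap_{\zeta < \theta} D^{\zeta}_{f,\varepsilon}(F)$ and similarly for $g$, so intersecting with $U$ commutes with the intersection over $\zeta$. The content lies entirely in the successor step.

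For the successor step, assume the inclusion for $\eta$ and write $F_\eta = D_{f,\varepsilon}^\eta(F)$, $G_\eta = D_{g, \varepsilon/4}^\eta(F)$. Fix $x \in D_{f,\varepsilon}^{\eta+1}(F) \cap U$; the inductive hypothesis places $x$ in $G_\eta$, so it suffices to verify $\omega(g, x, G_\eta) \ge \varepsilon/4$. The key trick is that oscillation is a local quantity, so given any open $V \ni x$ I may replace it by $V \cap U$, which remains an open neighborhood of $x$; this ensures that all witnesses I extract lie in $F \cap U$, where the comparison $|f - g| \le \varepsilon/4$ is available. From $\omega(f, x, F_\eta) \ge \varepsilon$, for each $\delta > 0$ I can pick $y_1, y_2 \in (V \cap U) \cap F_\eta$ with $|f(y_1) - f(y_2)| \ge \varepsilon - \delta$; by the induction hypothesis these also lie in $G_\eta$, and the pointwise estimate gives $|g(y_1) - g(y_2)| \ge \varepsilon - \delta - \varepsilon/2$, which exceeds $\varepsilon/4$ for any small enough $\delta$. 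Taking the infimum over $V$ yields $\omega(g, x, G_\eta) \ge \varepsilon/4$, as required.

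The only real obstacle is the need to restrict to neighborhoods $V \subseteq U$ before quoting the oscillation of $f$: outside $U$ nothing links $f$ and $g$, so witnesses escaping $U$ would break the comparison. The factor $4$ in the denominator is exactly what buys the necessary slack, since we lose $2 \cdot \varepsilon/4 = \varepsilon/2$ passing from an oscillation of $f$ to an oscillation of $g$, and $\varepsilon - \varepsilon/2 = \varepsilon/2$ still dominates the threshold $\varepsilon/4$. No use is made of the Baire class $1$ hypothesis beyond ensuring that the derivatives are meaningful operations on closed sets.
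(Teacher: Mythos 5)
Your proof is correct and is essentially the paper's argument: the same transfinite induction whose whole content is the successor step, with the induction hypothesis used to transfer points of $D_{f,\varepsilon}^\eta(F)\cap V$ into $D_{g,\varepsilon/4}^\eta(F)$ and a triangle-inequality loss of $2\cdot\varepsilon/4$ absorbed by the factor $4$. The only difference is presentational: you run the successor step directly (witnesses of large $f$-oscillation yield $g$-oscillation at least $\varepsilon/2-\delta>\varepsilon/4$), while the paper phrases the identical estimate contrapositively (small $g$-oscillation on a neighborhood $V\subseteq U$ forces $f$-oscillation at most $3\varepsilon/4<\varepsilon$).
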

  \begin{proof}
    The proof is by transfinite induction on $\eta$. For $\eta = 0$ this 
    is obvious from the definition of the derivative. 
    Let $x \in \left(D_{g, \frac{\varepsilon}{4}}^\eta(F) \cap U \right)
    \setminus D_{g, \frac{\varepsilon}{4}}^{\eta + 1}(F)$, 
    we need to show that $x \not \in D_{f, \varepsilon}^{\eta + 1}(F)$. 
    There is an open neighborhood $x \in V \subseteq U$ such that 
    $|g(y) - g(z)| < \frac{\varepsilon}{4}$ for every 
    $y, z \in D_{g, \frac{\varepsilon}{4}}^\eta(F) \cap V$. Then 
    $|f(y) - f(z)| < \frac{3}{4}\varepsilon$, 
    for every $y,z \in D_{g, \frac{\varepsilon}{4}}^\eta(F) \cap V$. 
    By the induction hypothesis $D_{f, \varepsilon}^\eta(F) \cap V 
    \subseteq D_{g, \frac{\varepsilon}{4}}^\eta(F) \cap V$, 
    hence this holds for every 
    $y,z \in D_{f, \varepsilon}^\eta(F) \cap V$, 
    thus $x \not \in D_{f, \varepsilon}^{\eta + 1}(F)$. This shows 
    the successor case, and for limit $\eta$ the lemma is an easy 
    consequence of the definition of the derivative. 
  \end{proof}
  Applying the lemma with $g = f^k$, $F = X$ and 
  $\eta = \omega^{\lambda + 1}$, 
  we get that $D_{f, \varepsilon}^{\omega^{\lambda + 1}}(X) \cap U 
  \subseteq D_{f^k, \frac{\varepsilon}{4}}^{\omega^{\lambda + 1}} (X)
  \cap U = \emptyset$, since $\beta(f^k) \le \omega^{\lambda + 1}$. 
  This shows that 
  $x \not \in D_{f, \varepsilon}^{\omega^{\lambda + 1}}(X)$, proving 
  \eqref{e:tending to zero} for the successor case. 
  
  The proof of \eqref{e:tending to zero} for the limit case is similar. 
  Let $\lambda$ be a limit ordinal and let $\lambda_k \to \lambda$, 
  $\lambda_k < \lambda$. 
  Let 
  $$
    f^k = \sideset{}{^*}\sum_{\eta < \omega^{\lambda_k}} (-1)^\eta 
  f_\eta.
  $$ By \eqref{e:not tending to zero} for $\lambda_k < \lambda$ 
  we have $f^k \in \mathscr{B}_1^{\lambda_k + 1} \subseteq 
  \mathscr{B}_1^{\lambda}$. 
  Again by \eqref{e:alternating sum spec}, 
  $0 \le f - f^k \le f_{\omega^{\lambda_k}}$, and using that 
  $f_\eta \to 0$ and $f_\eta$ is USC, for a fixed $x \in X$ we get 
  a neighborhood $x \in U$ and a $k$ such that 
  $|f(y) - f^k(y)| \le \frac{\varepsilon}{4}$ for every $y \in U$. 
  The application of Lemma \ref{l:derivatives of functions} yields 
  $D_{f, \varepsilon}^{\omega^\lambda}(X) \cap U 
  \subseteq D_{f^k, \frac{\varepsilon}{4}}^{\omega^\lambda}(X) 
  \cap U = \emptyset$, hence 
  $x \not \in D_{f, \varepsilon}^{\omega^\lambda}(X)$. As we started 
  with an arbitrary $x \in X$, this shows 
  $D_{f, \varepsilon}^{\omega^\lambda}(X) = \emptyset$, thus 
  $\beta(f) \le \omega^\lambda$, proving $f \in \mathscr{B}_1^\lambda$. 
  
  It remains to prove \eqref{e:not tending to zero}. Now we can use 
  \eqref{e:tending to zero} for $\lambda$ as we proved it using 
  \eqref{e:not tending to zero} only for smaller ordinals. 
  Let $(f_\eta)_{\eta < \omega^\lambda} \in \dusb_1'$ and 
  $\lambda_k \to \omega^\lambda$, $\lambda_k < \omega^\lambda$ even. 
  Let 
  $$
    f = \sideset{}{^*}\sum_{\eta < \omega^{\lambda}} (-1)^\eta f_\eta
    \text{\;\; and \;\;} 
    f^k = \sideset{}{^*}\sum_{\eta < \lambda_k} (-1)^\eta f_\eta.
  $$
  Since we can extend the sequence $(f_\eta)_{\eta < \lambda_k}$ by 
  0 functions to a sequence in $\dusb_1$ of length $\omega^\lambda$, 
  using \eqref{e:tending to zero} we get that 
  $f^k \in \mathscr{B}_1^\lambda$. 
  By \eqref{e:sup=limit} we have $f^k \to f$, moreover, 
  \eqref{e:alternating sum general} for the sequence 
  $(f_\eta)_{\eta < \omega^\lambda + 1} \in \dusb_1$, where 
  $f_{\omega^{\lambda}} = g = \inf_{\eta < \omega^\lambda} f_\eta$ 
  is a USC function, yields 
  \begin{equation}
    \label{e:1}
    0 \le f - f^k \le f_{\lambda_k} - g.
  \end{equation}
  
  It is enough to prove that 
  $D_{f, \varepsilon}^{\omega^{\lambda + 1}}(X) = \emptyset$ 
  for every fixed $\varepsilon > 0$. In order to prove this let 
  $F_n = \{x \in X : g(x) \ge n \cdot \frac{\varepsilon}{12}\}$. 
  Note that $g$ is USC, hence $F_n$ is closed for every $n \in \N$. 
  Since $\bigcap_n F_n = \emptyset$, it is enough to prove that 
  \begin{equation}
    \label{e:D(F_n) <= F_(n+1)}
    D_{f, \varepsilon}^{\omega^\lambda}(F_n) \subseteq F_{n + 1},
  \end{equation}
  since then by induction on $n$ one can easily get that 
  $D_{f, \varepsilon}^{\omega^{\lambda} \cdot n}(X) \subseteq F_n$, hence 
  \begin{equation*}
    D_{f, \varepsilon}^{\omega^{\lambda + 1}}(X) = 
    \bigcap_{n \in \N} D_{f, \varepsilon}^{\omega^{\lambda} \cdot n}(X)
    \subseteq \bigcap_{n \in \N} F_n = \emptyset. 
  \end{equation*}    
  
  Let $x \in F_n \setminus F_{n + 1}$. 
  Since $f_{\lambda_k} \to g$, there exists a $k$ such that 
  \begin{equation}
    \label{e:3}
    f_{\lambda_k}(x) - g(x) \le \frac{\varepsilon}{12}.
  \end{equation}
  Since $f_{\lambda_k}$ is USC, there exists a neighborhood $U \ni x$ 
  such that 
  $f_{\lambda_k}(y) < f_{\lambda_k}(x) + \frac{\varepsilon}{12}$ 
  for every $y \in U$. Using that $x \in F_n \setminus F_{n + 1}$, 
  we have $g(x) - g(y) \le \frac{\varepsilon}{12}$ for every $y \in F_n$. 
  Using \eqref{e:1}, the last two inequalities and \eqref{e:3} we get 
  that for every $y \in U \cap F_n$,
  $$
    0 \le f(y) - f^k(y) \le f_{\lambda_k}(y) - g(y) \le 
    f_{\lambda_k}(x) + \frac{\varepsilon}{12} - g(x) + 
    \frac{\varepsilon}{12} \le \frac{\varepsilon}{4}.
  $$
  
  Again applying Lemma \ref{l:derivatives of functions} with $g = f^k$, 
  $F = F_n$ and $\eta = \omega^\lambda$, we get that 
  $D_{f, \varepsilon}^{\omega^\lambda}(F_n) \cap U \subseteq 
  D_{f^k, \frac{\varepsilon}{4}}^{\omega^\lambda}(F_n) \cap U = 
  \emptyset$, hence $x \not\in D_{f, \varepsilon}^{\omega^\lambda}(F_n)$. 
  Since $x \in F_n \setminus F_{n + 1}$ was arbitrary, we get 
  \eqref{e:D(F_n) <= F_(n+1)} as desired. 
  This finishes the proof of \eqref{e:not tending to zero} and also 
  the proof of the theorem. 
\end{proof}

Now we prove an analogue of the previous theorem for 
the Baire class $\xi$ case. 

\begin{theorem}
  \label{t:Baire xi alternating}
  Let $f$ be a bounded Baire class $\xi$ function. Then 
  $f \in \mathscr{B}_\xi^\lambda$ if and only if 
  $\len_\xi(f) \le \omega^\lambda$. 
\end{theorem}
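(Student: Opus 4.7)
The plan is to reduce the theorem to its Baire class $1$ counterpart (Theorem \ref{t:Baire 1 alternating}) via topology refinement, with Remark \ref{r:bounded classes} as the bridge between $\mathscr{B}_\xi^\lambda$ and $\mathscr{B}_1^\lambda(\tau')$ for a suitable topology $\tau' \in T_{f, \xi}$. The key bookkeeping observation is that if $\tau' \supseteq \tau$ is Polish with $\tau' \subseteq \boldsymbol{\Sigma}^0_\xi(\tau)$, then any function $g$ that is USC with respect to $\tau'$ automatically satisfies $\{g < c\} \in \tau' \subseteq \boldsymbol{\Sigma}^0_\xi(\tau)$ for every $c \in \R$, and so is semi-Borel class $\xi$ in $\tau$; conversely, any countable family of semi-Borel class $\xi$ functions in $\tau$ can be made USC simultaneously in one such Polish refinement $\tau'$, by adjoining their sublevel sets at rational heights to $\tau$. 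This correspondence preserves non-negativity, boundedness, monotonicity, and limits, so it identifies $\dusb_\xi(\tau)$-sequences with $\dusb_1(\tau')$-sequences on the nose.

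For the direction $f \in \mathscr{B}_\xi^\lambda \Rightarrow \len_\xi(f) \le \omega^\lambda$, I would invoke Remark \ref{r:bounded classes} to pick $\tau' \in T_{f, \xi}$ with $f \in \mathscr{B}_1^\lambda(\tau')$, apply Theorem \ref{t:Baire 1 alternating} inside $(X, \tau')$ to obtain $(f_\eta)_{\eta < \omega^\lambda} \in \dusb_1(\tau')$ and $c \in \R$ with $f = c + \sideset{}{^*}\sum_{\eta < \omega^\lambda} (-1)^\eta f_\eta$, and then invoke the above observation to conclude $(f_\eta)_{\eta < \omega^\lambda} \in \dusb_\xi$. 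This immediately yields $\len_\xi(f) \le \omega^\lambda$.

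For the converse $\len_\xi(f) \le \omega^\lambda \Rightarrow f \in \mathscr{B}_\xi^\lambda$, suppose $f = c + \sideset{}{^*}\sum_{\eta < \omega^\lambda} (-1)^\eta f_\eta$ with $(f_\eta)_{\eta < \omega^\lambda} \in \dusb_\xi$. Since $\lambda < \omega_1$, the ordinal $\omega^\lambda$ is countable, so $\{\{f_\eta < q\} : \eta < \omega^\lambda,\ q \in \Q\}$ is a countable family in $\boldsymbol{\Sigma}^0_\xi(\tau)$. Standard change-of-topology results yield a finer Polish topology $\tau' \subseteq \boldsymbol{\Sigma}^0_\xi(\tau)$ in which all these sets are open, so each $f_\eta$ becomes USC in $\tau'$ and $(f_\eta)_{\eta < \omega^\lambda} \in \dusb_1(\tau')$. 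Theorem \ref{t:Baire 1 alternating} applied in $\tau'$ then gives $f \in \mathscr{B}_1^\lambda(\tau')$, in particular $f \in \mathcal{B}_1(\tau')$, so $\tau' \in T_{f, \xi}$, and Remark \ref{r:bounded classes} concludes $f \in \mathscr{B}_\xi^\lambda$.

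The main obstacle is the topology-refinement step in the converse direction: one must adjoin countably many $\boldsymbol{\Sigma}^0_\xi(\tau)$ sets to $\tau$ while simultaneously preserving Polishness of the new topology and the bound $\tau' \subseteq \boldsymbol{\Sigma}^0_\xi(\tau)$. This is handled by the classical iterative construction (adjoin one $\boldsymbol{\Sigma}^0_\xi$ set at a time using its $\boldsymbol{\Pi}^0_\xi$ complement as a clopen piece of a disjoint-sum Polish topology, then pass to the supremum of countably many Polish topologies), and it is exactly the reason the Baire class $1$ results had to be reproved for arbitrary Polish spaces in the earlier sections of the paper.
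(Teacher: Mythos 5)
Your proposal is correct and follows essentially the same route as the paper's own proof: both directions pass through Remark \ref{r:bounded classes} and Theorem \ref{t:Baire 1 alternating}, with the converse handled by refining $\tau$ to a Polish $\tau' \subseteq \boldsymbol{\Sigma}^0_\xi(\tau)$ in which the countably many sublevel sets $\{f_\eta < q\}$, $q \in \Q$, become open — the paper cites Kuratowski's theorem \cite[22.18]{K} for exactly this step. One caveat about your closing sketch of that refinement: you cannot adjoin a $\boldsymbol{\Sigma}^0_\xi$ set as a clopen piece of a disjoint-sum topology, since that would put its $\boldsymbol{\Pi}^0_\xi$ complement into $\tau'$, violating $\tau' \subseteq \boldsymbol{\Sigma}^0_\xi(\tau)$ (and a $\boldsymbol{\Sigma}^0_\xi$ set need not carry a Polish subspace topology); the correct mechanism, which the paper makes explicit, is to write each $\{f_\eta < q\}$ as a countable union of sets $F_n^{\eta,q} \in \bigcup_{\zeta < \xi} \boldsymbol{\Pi}^0_\zeta \subseteq \boldsymbol{\Delta}^0_\xi$ and make those pieces clopen, which keeps both the pieces and their complements inside $\boldsymbol{\Sigma}^0_\xi(\tau)$.
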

\begin{remark}
  If one considers $\len_\xi(f)$ as the rank of the function $f$, 
  then the theorem says that this rank essentially coincides with 
  $\alpha_\xi^*$, $\beta_\xi^*$ and $\gamma_\xi^*$ on the bounded Baire 
  class $\xi$ functions. 
\end{remark}
\begin{proof}
  First we prove that if $f \in \mathscr{B}_\xi^\lambda$ then 
  $\len_\xi(f) \le \omega^\lambda$. 
  By Remark \ref{r:bounded classes} we have a topology 
  $\tau' \in T_{f, \xi}$ such that 
  $f \in \mathscr{B}_1^\lambda(\tau')$. 
  Using Theorem \ref{t:Baire 1 alternating}, there is a sequence 
  $(f_\eta)_{\eta < \omega^\lambda} \in \dusb_1(\tau')$ and 
  $c \in \R$ with 
  $$
    f = c + \sideset{}{^*}\sum_{\eta < \omega^\lambda} 
    (-1)^\eta f_\eta.
  $$
  The function $f_\eta$ is USC in $\tau'$ for each $\eta$, hence 
  $\{f_\eta < c\} \in \boldsymbol{\Sigma}^0_1(\tau')$, and since 
  $\tau' \in T_{f, \xi}$, we have $\boldsymbol{\Sigma}^0_1(\tau') \subseteq 
  \boldsymbol{\Sigma}^0_\xi(\tau)$, thus $f_\eta$ is a semi-Borel 
  class $\xi$ function with respect to the original topology $\tau$. 
  From this, one can easily conclude that 
  $(f_\eta)_{\eta < \omega^\lambda} \in \dusb_\xi(\tau)$ and 
  consequently $\len_\xi(f) \le \omega^\lambda$, 
  proving this part of the theorem.
  
  For the other direction, suppose that 
  $\len_\xi(f) \le \omega^\lambda$, and let 
  $$
    f = c + \sideset{}{^*}\sum_{\eta < \omega^\lambda} 
    (-1)^\eta f_\eta, 
  $$
  where $(f_\eta)_{\eta < \omega^\lambda} \in \dusb_\xi$. Since 
  $\{f_\eta < q\} \in \boldsymbol{\Sigma}^0_\xi$ for every 
  $q \in \Q$, it can be written as 
  $\{f_\eta < q\} = \bigcup_n F_n^{\eta, q}$, where 
  $F_n^{\eta, q} \in \bigcup_{\zeta < \xi} \boldsymbol{\Pi}^0_\zeta 
  \subseteq \boldsymbol{\Delta}^0_\xi$. Using Kuratowski's theorem 
  (see e.g.~\cite[22.18]{K}), there exists a Polish refinement 
  $\tau' \supseteq \tau$ 
  such that $F_n^{\eta, q} \in \boldsymbol{\Delta}^0_1(\tau')$ 
  for every $\eta$, $n$ and $q \in \Q$, and 
  $\tau' \subseteq \boldsymbol{\Sigma}^0_\xi(\tau)$. 
  
  Now $\{f_\eta < q\} \in \boldsymbol{\Sigma}^0_1(\tau')$ for every 
  $\eta$ and $q \in \Q$, hence $f_\eta$ is USC in $\tau'$, since 
  $\{f_\eta < c\} = \bigcup_{n} \{f_\eta < q_n\}$ is open, where 
  $q_n \in \Q$, $q_n \to c$, $q_n < c$. From this 
  $(f_\eta)_{\eta < \omega^\lambda} \in \dusb_1(\tau')$, hence with 
  the application of Theorem \ref{t:Baire 1 alternating} for the space 
  $(X, \tau')$, we get 
  $f \in \mathscr{B}_1^\lambda(\tau')$. 
  Note that $\tau' \in T_{f, \xi}$, hence Remark 
  \ref{r:bounded classes} yields 
  $f \in \mathscr{B}_\xi^\lambda(\tau)$, completing the proof. 
\end{proof}

\section{A way of generating the classes $\mathscr{B}_\xi^\lambda$ from lower 
classes}

Kechris and Louveau introduced the notion of pseudouniform convergence. 
\begin{definition}[\cite{KL}]
  A sequence $(f_n)_{n \in \N}$ of functions is 
  \emph{pseudouniformly convergent} if 
  $\gamma((f_n)_{n \in \N}) \le \omega$, 
  as defined in \eqref{e:gamma(f_n)}. 
\end{definition}
\begin{definition}
  If $\mathcal{F}$ is a class of bounded Baire class 1 functions then 
  let $\Phi(\mathcal{F})$ be the set of those bounded Baire class 1 
  functions that are the pseudouniform limit of a sequence of functions 
  from $\mathcal{F}$, i.e., 
  \begin{equation*}
  \begin{split}
    \Phi(\mathcal{F}) = \{&f \in \mathcal{B}_1 : \text{$f$ is bounded, }
    \\ &\exists (f_n)_{n \in \N} \in \mathcal{F}^\N \left(
    \gamma((f_n)_{n \in \N}) \le \omega 
    \text{ and $f_n \to f$ pointwise}\right)\}. 
  \end{split}
  \end{equation*}
\end{definition}

Now we define inductively the families $\Phi_\lambda$ of functions by 
$\Phi_0 = \mathscr{B}_1^1$ and for $0 < \lambda < \omega_1$, 
$$
  \Phi_\lambda = \Phi\left( \bigcup_{\eta < \lambda} \Phi_\eta
  \right).
$$

\begin{theorem}
  \label{t:Baire 1 generate}
  For every ordinal $\lambda < \omega_1$, we have 
  $\Phi_\lambda = \mathscr{B}_1^{\lambda + 1}$. 
\end{theorem}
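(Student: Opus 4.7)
The proof proceeds by transfinite induction on $\lambda$, with the base $\lambda = 0$ immediate since $\Phi_0 = \mathscr{B}_1^1$ by definition. For the inductive step I establish the two inclusions separately. For $\Phi_\lambda \subseteq \mathscr{B}_1^{\lambda+1}$, I take $f \in \Phi_\lambda$ arising as a pseudouniform limit of $(f_n)\subseteq\bigcup_{\eta<\lambda}\Phi_\eta$; the inductive hypothesis places each $f_n$ in $\mathscr{B}_1^\lambda$, so for each $n$ I can pick continuous $h_{n,k}\to f_n$ with $\gamma((h_{n,k})_k)\le\omega^\lambda$, and I would diagonalize into a continuous sequence $(H_m)$ with $H_m\to f$ such that $\gamma((H_m))\le\omega\cdot\omega^\lambda=\omega^{\lambda+1}$. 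The key estimate is that the outer rank $\omega$ coming from pseudouniform convergence compounds with the inner rank $\omega^\lambda$ of each approximation, which I would verify through Lemma~\ref{l:derivatives of functions}-style perturbation bounds on the derivative iterates $D^\zeta_{(H_m),\varepsilon}$.

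For $\mathscr{B}_1^{\lambda+1}\subseteq\Phi_\lambda$, I first invoke Theorem~\ref{t:Baire 1 alternating} to write $f = c + \sideset{}{^*}\sum_{\eta<\omega^{\lambda+1}}(-1)^\eta f_\eta$ with $(f_\eta)\in\dusb_1$, and then construct the approximating sequence as truncated alternating sums, treating the limit and successor cases for $\lambda$ separately. In the limit case I fix $\lambda_k\nearrow\lambda$ with $\lambda_k<\lambda$ and set $f^k = c + \sideset{}{^*}\sum_{\eta<\omega^{\lambda_k}}(-1)^\eta f_\eta$; the sequence $(f_\eta)_{\eta<\omega^{\lambda_k}}$ lies in $\dusb_1'$, so \eqref{e:not tending to zero} at level $\lambda_k$ gives $f^k \in \mathscr{B}_1^{\lambda_k+1}\subseteq\bigcup_{\mu<\lambda}\Phi_\mu$ by the inductive hypothesis. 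Pseudouniform convergence $f^k \to f$ then follows from the estimate $0 \le f - f^k \le f_{\omega^{\lambda_k}}$ supplied by \eqref{e:alternating sum spec}, together with the elementary observation that a decreasing non-negative USC sequence tending pointwise to zero is pseudouniformly convergent (indeed with $\gamma$-rank at most $1$).

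The successor case $\lambda = \mu + 1$ is the main technical obstacle. I take $f^k = c + \sideset{}{^*}\sum_{\eta<\omega^\lambda\cdot 2k}(-1)^\eta f_\eta$; pseudouniform convergence $f^k \to f$ follows as before from $0 \le f - f^k \le f_{\omega^\lambda\cdot 2k}$. The delicate step is to certify $f^k \in \Phi_\mu = \mathscr{B}_1^\lambda$. I would first verify, by transfinite induction on $\theta$ exploiting that $\omega^\lambda$ is an even ordinal (being a limit), the additivity identity
\begin{equation*}
  \sideset{}{^*}\sum_{\eta<\omega^\lambda+\theta}(-1)^\eta f_\eta = \sideset{}{^*}\sum_{\eta<\omega^\lambda}(-1)^\eta f_\eta + \sideset{}{^*}\sum_{\eta<\theta}(-1)^\eta f_{\omega^\lambda+\eta},
\end{equation*}
and iterate it to decompose $f^k - c = \sum_{j=0}^{2k-1} G_j$ where $G_j = \sideset{}{^*}\sum_{\eta<\omega^\lambda}(-1)^\eta f_{\omega^\lambda \cdot j + \eta}$ and $(f_{\omega^\lambda\cdot j+\eta})_\eta \in \dusb_1'$. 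Direct application of \eqref{e:not tending to zero} yields only the weaker conclusion $G_j \in \mathscr{B}_1^{\lambda+1}$; the heart of the argument is to improve this to $G_j \in \mathscr{B}_1^\lambda$, which I expect to achieve via a stratified derivative analysis using the USC infimum $L_j = \inf_{\eta<\omega^\lambda} f_{\omega^\lambda \cdot j + \eta}$ together with Lemma~\ref{l:derivatives of functions}, in the spirit of the proof of \eqref{e:not tending to zero} itself. Granted this sharper bound, closure of $\mathscr{B}_1^\lambda$ under finite sums (Lemma~\ref{l:gamma is additive}) gives $f^k \in \mathscr{B}_1^\lambda$, completing the induction.
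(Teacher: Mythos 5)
Your proposal for the inclusion $\mathscr{B}_1^{\lambda+1} \subseteq \Phi_\lambda$ contains a genuine gap at the step you yourself flag as the heart of the argument, and the hoped-for repair is not just missing but false. You want to improve \eqref{e:not tending to zero} for a full block, i.e.\ to show that an alternating sum $G_j = \sideset{}{^*}\sum_{\eta<\omega^\lambda}(-1)^\eta f_{\omega^\lambda\cdot j+\eta}$ of a length-$\omega^\lambda$ sequence from $\dusb_1'$ lies in $\mathscr{B}_1^\lambda$ rather than $\mathscr{B}_1^{\lambda+1}$. On non-compact Polish spaces the bound $\mathscr{B}_1^{\lambda+1}$ in \eqref{e:not tending to zero} is sharp. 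Already for $\lambda = 1$: take a decreasing sequence $(E_n)_{n<\omega}$ of closed sets with nonempty intersection, chosen (using that the difference hierarchy over the closed sets does not stabilize at length $\omega$; compare the sets of limit rank in Remark \ref{r:Polish failure}, which exist precisely because the space is not compact) so that $A = \bigcup_{n \text{ even}} E_n \setminus E_{n+1}$ is not a transfinite difference of length $\omega$ of a sequence with empty intersection, i.e.\ $\alpha_1(\chi_A) > \omega$. Then $\chi_A = \sideset{}{^*}\sum_{\eta<\omega}(-1)^\eta \chi_{E_\eta}$ with $(\chi_{E_\eta})_{\eta<\omega} \in \dusb_1'$, yet $\chi_A \notin \mathscr{B}_1^1$. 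The obstruction is exactly the one your ``stratified analysis'' would have to overcome: the residue $g = \inf_\eta f_\eta$ is only USC, not continuous, so the block cannot be renormalized into a $\dusb_1$ sequence to apply \eqref{e:tending to zero}. This is why the paper never cuts at the block boundaries $\omega^\lambda\cdot j$: for characteristic functions it cuts \emph{inside} every block at $\lambda_k < \omega^\lambda$ simultaneously (the sets $B_k$), so that Claim \ref{c:derivatives} gives $\beta(f_k) \le \lambda_k\cdot\omega$, which does land in $\bigcup_{\eta<\lambda}\mathscr{B}_1^{\eta+1}$; it then passes to step functions via Lemma \ref{l:gamma is additive} and to general $f$ via the series decomposition of Lemma \ref{l:sum of step functions} with the Lipschitz truncations $h^k$ — not via Theorem \ref{t:Baire 1 alternating} applied to $f$ itself.

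Your limit case has a separate, more elementary error: the truncation points $\omega^{\lambda_k}$ are cofinal in $\omega^\lambda$, not in $\omega^{\lambda+1}$, so by \eqref{e:alternating sum spec} your $f^k$ converge to the partial sum $c + \sideset{}{^*}\sum_{\eta<\omega^\lambda}(-1)^\eta f_\eta$ rather than to $f$ (the bound $f_{\omega^{\lambda_k}}$ decreases to $f_{\omega^\lambda}$, which need not vanish since the $\dusb_1$ condition only forces $f_\eta \to 0$ as $\eta \to \omega^{\lambda+1}$). As written you prove only $\mathscr{B}_1^\lambda \subseteq \Phi_\lambda$ for limit $\lambda$, one exponent short of the theorem; and replacing the truncation points by the cofinal $\omega^\lambda\cdot k$ runs straight into the full-block obstruction above. (Your supporting observation — that convergence dominated by a decreasing non-negative USC sequence tending pointwise to $0$ is pseudouniform, indeed of $\gamma$-rank $1$ — is correct, and is the Polish-space substitute for Dini's theorem implicitly at work in the paper's claims.) Finally, for the easier inclusion $\Phi_\lambda \subseteq \mathscr{B}_1^{\lambda+1}$, note that the paper avoids your diagonalization entirely: it bounds $\beta(f)$ of the limit directly, showing $D^{\omega^\lambda}_{f,\varepsilon}(F) \subseteq D_{(f_n)_{n\in\N},\varepsilon/4}(F)$ via Lemma \ref{l:derivatives of functions} and iterating $\omega$ times; your sketch leaves the compounding of ranks unverified, and the arithmetic should read $\omega^\lambda\cdot\omega = \omega^{\lambda+1}$ — the product $\omega\cdot\omega^\lambda$ you wrote equals $\omega^{1+\lambda}$, which is $\omega^\lambda$ for infinite $\lambda$ and would not even state the right bound.
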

\begin{remark}
  \label{r:Polish failure}
  This theorem is a nice analogue of the well-known theorem that 
  a function is of Baire class $\lambda$ if and only if it is 
  Borel-$(\lambda + 1)$ (see e.g.~\cite[24.3, 24.10]{K}). 
\end{remark}
\begin{remark}
  The authors of \cite{KL} defined $\Phi_\lambda$ for limit $\lambda$ as 
  the uniform limits of functions from the smaller classes, and 
  they proved that in this case 
  $\Phi_\lambda = \mathscr{B}_1^\lambda$ (with $\Phi_0 = \mathscr{B}^0_1$), 
  if the space is compact. 
  However, this is not the case for arbitrary Polish spaces. 
  We sketch the proof of this. 
  
  First, for every $\lambda < \omega_1$, one can easily construct a 
  countable closed set $F_\lambda \subseteq \R$ and a subset 
  $A_\lambda \subseteq F_\lambda$ such that 
  the $\alpha$ rank of $\chi_{A_\lambda}$ in the space $F_\lambda$ is 
  equal to $\lambda$. (Let $F_\lambda$ be a set with Cantor-Bendixson rank 
  $\lambda$ (see \cite[6.12]{K}). Then choose $A_\lambda$ such that 
  $A_\lambda$ and $F_\lambda \setminus A_\lambda$ are both ``dense'' in 
  $F_\lambda$, meaning that if $F_\lambda^\alpha \subseteq F_\lambda$ is the $\alpha$th 
  iterated Cantor-Bendixson derivative of $F_\lambda$ then the 
  closures of both $A_\lambda \cap F_\lambda^\alpha$ and 
  $F_\lambda^\alpha \setminus A_\lambda$ contain every limit point of $F_\lambda^\alpha$.) 
  This step will not work in compact spaces as the $\alpha$ 
  rank of a characteristic function on a compact space is always a 
  successor ordinal. 
  
  Then, it is easy to see that $\chi_{A_{\omega^\omega}}$ cannot be the 
  uniform limit of functions from 
  $\bigcup_{n < \omega} \mathscr{B}_1^n$, since if 
  $\|f - \chi_{A_{\omega^\omega}}\| \le 1/3$ then 
  $\alpha(f) \ge \alpha(\chi_{A_{\omega^\omega}}) = \omega^\omega$. 
\end{remark}
\begin{proof}[Proof of Theorem \ref{t:Baire 1 generate}]
  We prove the theorem by transfinite induction. For $\lambda = 0$ it 
  is exactly the definition of $\Phi_0$. 
  
  To prove that $\Phi_\lambda \subseteq \mathscr{B}_1^{\lambda + 1}$, 
  it is enough to show that 
  \begin{equation}
    \label{e:Phi(B^l) <= B^(l + 1)}
    \Phi(\mathscr{B}_1^{\lambda}) \subseteq 
    \mathscr{B}_1^{\lambda + 1},
  \end{equation}
  since for successor $\lambda$ it is 
  exactly 
  what is required, and for limit $\lambda$ we have 
  $$
    \Phi_\lambda = 
    \Phi\left( \bigcup_{\eta < \lambda} \Phi_\eta \right) =  
    \Phi\left( \bigcup_{\eta < \lambda} \mathscr{B}_1^{\eta + 1} 
    \right) 
    \subseteq \Phi(\mathscr{B}_1^\lambda).
  $$
  
  Let $(f_n)_{n \in \N}$ be a sequence from 
  $\mathscr{B}_1^{\lambda}$ converging pointwise to a bounded 
  function $f$. 
  \begin{claim}
    For every closed set $F$ and $\varepsilon > 0$, 
    $$
      D_{f, \varepsilon}^{\omega^{\lambda}}(F) \subseteq 
      D_{(f_n)_{n \in \N}, \frac{\varepsilon}{4}}(F).
    $$
  \end{claim}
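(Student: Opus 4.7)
The plan is to prove the contrapositive: I will show that a point $x$ lying outside $D_{(f_n)_{n \in \N}, \varepsilon/4}(F)$ must also lie outside $D_{f, \varepsilon}^{\omega^\lambda}(F)$. Unwinding the definition of $\omega((f_n)_{n \in \N}, x, F)$, the hypothesis furnishes an open neighborhood $U \ni x$ and an index $N \in \N$ such that
$$
  |f_m(y) - f_n(y)| < \varepsilon/4 \text{ for all } n,m \ge N \text{ and every } y \in U \cap F.
$$
Fixing $n = N$ and letting $m \to \infty$ along the pointwise convergence $f_m \to f$, this specialises to the estimate $|f_N(y) - f(y)| \le \varepsilon/4$ on $U \cap F$.

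The point of isolating $f_N$ is that $f_N \in \mathscr{B}_1^{\lambda}$, so by the equivalences recalled just above Remark \ref{r:bounded classes} one has $\beta(f_N) \le \omega^\lambda$, i.e.\ $D_{f_N, \delta}^{\omega^\lambda}(X) = \emptyset$ for every $\delta > 0$. Applying Lemma \ref{l:derivatives of functions} to the pair $(f, f_N)$ with the closed set $F$, the open set $U$, and $\eta = \omega^\lambda$, and then using monotonicity of iterated derivatives in the underlying closed set (an immediate transfinite induction from the built-in monotonicity $A \subseteq B \Rightarrow D(A) \subseteq D(B)$), one obtains
$$
  D_{f, \varepsilon}^{\omega^\lambda}(F) \cap U
  \subseteq D_{f_N, \varepsilon/4}^{\omega^\lambda}(F) \cap U
  \subseteq D_{f_N, \varepsilon/4}^{\omega^\lambda}(X) = \emptyset.
$$
In particular $x \notin D_{f, \varepsilon}^{\omega^\lambda}(F)$, as required.

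I do not anticipate any real obstacle here, since every ingredient is already at hand: the contrapositive reformulation, the Cauchy-style witness $f_N$ produced by the failure of the sequence derivative, the rank bound $\beta(f_N) \le \omega^\lambda$ inherited from membership in $\mathscr{B}_1^\lambda$, and Lemma \ref{l:derivatives of functions} to transport this bound from $f_N$ to $f$ throughout $U$. The only minor point demanding care is the passage from the strict inequality $|f_m - f_n| < \varepsilon/4$ to the non-strict bound $|f_N - f| \le \varepsilon/4$ upon taking the pointwise limit; but this non-strict form is exactly what Lemma \ref{l:derivatives of functions} is phrased with, so nothing is lost.
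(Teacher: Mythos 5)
Your proof is correct and is essentially identical to the paper's: both take the contrapositive, extract a neighborhood $U$ and index $N$ from the failure of the sequence oscillation, pass to the limit to get $|f_N - f| \le \varepsilon/4$ on $U \cap F$, and invoke Lemma \ref{l:derivatives of functions} together with $\beta(f_N) \le \omega^\lambda$. Your explicit mention of monotonicity of the iterated derivative in the underlying closed set (to pass from $F$ to $X$) only spells out a step the paper leaves implicit.
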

  \begin{proof}
    Let $x \in F \setminus 
    D_{(f_n)_{n \in \N}, \frac{\varepsilon}{4}}(F)$, we need to show that 
    $x \not \in D_{f, \varepsilon}^{\omega^{\lambda}}(F)$. By the 
    definition of the derivative, there exists a neighborhood 
    $x \in U$ and $N \in \N$ such that for every $y \in F \cap U$ and 
    $n, m \ge N$ we have $|f_n(y) - f_m(y)| < \frac{\varepsilon}{4}$. 
    As $f_n(y) \to f(y)$ for every $y \in X$, we have 
    $|f_N(y) - f(y)| \le \frac{\varepsilon}{4}$ for every 
    $y \in F \cap U$. Applying Lemma \ref{l:derivatives of functions} 
    with $g = f_N$ and $\eta = \omega^{\lambda}$, we get 
    $$
      D_{f, \varepsilon}^{\omega^{\lambda}}(F) \cap U \subseteq
      D_{f_N, \frac{\varepsilon}{4}}^{\omega^{\lambda}}(F) \cap U = 
      \emptyset,
    $$
    since $f_N \in \mathscr{B}_1^{\lambda}$. 
    Hence $x \not \in D_{f, \varepsilon}^{\omega^{\lambda}}(F)$, 
    proving the claim.
  \end{proof}
  Now suppose moreover that $\gamma((f_n)_{n \in \N}) \le \omega$, 
  we need to show that $\beta(f) \le \omega^{\lambda + 1}$.
  Applying the claim repeatedly with 
  $F = D_{(f_n)_{n \in \N}, \frac{\varepsilon}{4}}^n(X)$, 
  by induction we get for each 
  $n \in \N$ that 
  $D_{f, \varepsilon}^{\omega^{\lambda} \cdot n}(X) \subseteq 
  D_{(f_n)_{n \in \N}, \frac{\varepsilon}{4}}^n(X)$. 
  Taking the intersection for each $n \in \N$, we get 
  $D_{f, \varepsilon}^{\omega^{\lambda + 1}}(X) \subseteq 
  D_{(f_n)_{n \in \N}, \frac{\varepsilon}{4}}^\omega(X) = 
  \emptyset$, hence $f \in \mathscr{B}_1^{\lambda + 1}$, showing 
  \eqref{e:Phi(B^l) <= B^(l + 1)} and thus finishing 
  the proof of $\Phi_\lambda \subseteq \mathscr{B}_1^{\lambda + 1}$. 
  
  Now we show the other direction, i.e., that 
  $\Phi_{\lambda} \supseteq \mathscr{B}_1^{\lambda + 1}$. 
  We do this by transfinite induction on $\lambda$. This is 
  obvious for $\lambda = 0$. For $\lambda > 0$, using the statement 
  for each $\eta < \lambda$, we have 
  $\Phi_{\lambda} = 
  \Phi\left(\bigcup_{\eta < \lambda} \Phi_\eta\right) =  
  \Phi\left(\bigcup_{\eta < \lambda} \mathscr{B}_1^{\eta + 1} \right)$, 
  hence it is enough to show that 
  $\Phi\left(\bigcup_{\eta < \lambda} \mathscr{B}_1^{\eta + 1} \right)
    \supseteq \mathscr{B}_1^{\lambda + 1}.$
  
  Let $f \in \mathscr{B}_1^{\lambda + 1}$ be a characteristic 
  function, i.e., $f = \chi_A$ for some $A \subseteq X$. 
  Using the same argument as in the proof of 
  Theorem \ref{t:Baire 1 alternating}, $A$ can be written as 
  $$
    A = \bigcup_{\begin{subarray}{c}
      \eta < \omega^{\lambda + 1} \\ \text{$\eta$ is even}
    \end{subarray}} F_\eta \setminus F_{\eta + 1},
  $$
  where $(F_\eta)_{\eta < \omega^{\lambda + 1}}$ is a decreasing, 
  continuous sequence of closed sets with $F_0 = X$ and 
  $\bigcap_{\eta < \omega^{\lambda + 1}} F_\eta = \emptyset$.
  
  Let $\lambda_k \to \omega^\lambda$, $\lambda_k < \omega^\lambda$ be 
  an increasing sequence of even ordinals with $\lambda_k > 0$ and let 
  $$
    B_k = \bigcup_{n \in \N} \;\; \bigcup_{\begin{subarray}{c}
      \omega^\lambda\cdot n \le 
      \eta < \omega^\lambda \cdot n + \lambda_k \\ \text{$\eta$ is even}
    \end{subarray}} F_\eta \setminus F_{\eta + 1}.
  $$
  Let $f_k = \chi_{B_k}$, it is easy to see that $f_k \to f$ 
  pointwise. We need to show that this convergence is pseudouniform, 
  and that $f_k \in \bigcup_{\eta < \lambda} \mathscr{B}_1^{\eta + 1}$ 
  for every $k \in \N$. 
  
  The proof of the former statement is based on the following claim. 
  \begin{claim}
    For every $n \in \N$ and $\varepsilon >0$ we have 
    $D_{(f_k)_{k \in \N}, \varepsilon}^n(X) \subseteq 
    F_{\omega^\lambda \cdot n}$.
  \end{claim}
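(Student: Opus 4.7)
The plan is to prove the claim by induction on $n$, with the base case $n = 0$ being immediate since $D^0_{(f_k)_{k \in \N}, \varepsilon}(X) = X = F_0 = F_{\omega^\lambda \cdot 0}$. For the inductive step, assume $D^n_{(f_k)_{k \in \N}, \varepsilon}(X) \subseteq F_{\omega^\lambda \cdot n}$, and let $x \notin F_{\omega^\lambda \cdot (n+1)}$; I want to show $x \notin D^{n+1}_{(f_k)_{k \in \N}, \varepsilon}(X)$. If $x \notin D^n_{(f_k)_{k \in \N}, \varepsilon}(X)$ this is trivial, so I may assume $x \in F_{\omega^\lambda \cdot n} \setminus F_{\omega^\lambda \cdot (n+1)}$.

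Since $(F_\eta)_\eta$ is continuous and decreasing with $\bigcap_{\eta < \omega^{\lambda+1}} F_\eta = \emptyset$, there is a unique $\mu$ with $0 \le \mu < \omega^\lambda$ such that $x \in F_{\omega^\lambda \cdot n + \mu} \setminus F_{\omega^\lambda \cdot n + \mu + 1}$. As $F_{\omega^\lambda \cdot n + \mu + 1}$ is closed and does not contain $x$, I can choose an open neighborhood $V$ of $x$ with $V \cap F_{\omega^\lambda \cdot n + \mu + 1} = \emptyset$. Also, since $\lambda_k \to \omega^\lambda$, I may fix $N \in \N$ such that $\lambda_k > \mu$ for all $k \ge N$.

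The key computation is then: for every $y \in V \cap D^n_{(f_k)_{k \in \N}, \varepsilon}(X)$ and every $k \ge N$, I claim $f_k(y) = f(y)$. Indeed, by the induction hypothesis $y \in F_{\omega^\lambda \cdot n}$ and by the choice of $V$, $y \notin F_{\omega^\lambda \cdot n + \mu + 1}$, so the unique ordinal $\eta_y$ with $y \in F_{\eta_y} \setminus F_{\eta_y + 1}$ satisfies $\omega^\lambda \cdot n \le \eta_y \le \omega^\lambda \cdot n + \mu < \omega^\lambda \cdot n + \lambda_k$. Inspecting the definition of $B_k$, this forces $y \in B_k$ iff $\eta_y$ is even iff $y \in A$, i.e., $f_k(y) = \chi_A(y) = f(y)$. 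Consequently $|f_k(y) - f_m(y)| = 0$ for all $k, m \ge N$ and all $y \in V \cap D^n_{(f_k)_{k \in \N}, \varepsilon}(X)$, so $\omega((f_k)_{k \in \N}, x, D^n_{(f_k)_{k \in \N}, \varepsilon}(X)) = 0 < \varepsilon$, yielding $x \notin D^{n+1}_{(f_k)_{k \in \N}, \varepsilon}(X)$.

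The only subtle step is matching up the intricate nested-union definition of $B_k$ with the pointwise behaviour on $F_{\omega^\lambda \cdot n} \setminus F_{\omega^\lambda \cdot (n+1)}$; once one extracts the ordinal $\mu$ and realises that the neighborhood $V$ forces every relevant point into a single $\omega^\lambda$-block with ordinal strictly below $\omega^\lambda \cdot n + \lambda_k$, the verification that $f_k$ agrees with $f$ on $V \cap D^n_{(f_k)_{k \in \N}, \varepsilon}(X)$ is direct, and the rest is book-keeping.
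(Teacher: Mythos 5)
Your proof is correct and takes essentially the same route as the paper's: induction on $n$, reducing to a point $x \in F_{\omega^\lambda \cdot n} \setminus F_{\omega^\lambda \cdot (n+1)}$ and finding a neighborhood on which all $f_k$ with large $k$ agree on the relevant closed set. The paper uses continuity to write $F_{\omega^\lambda \cdot (n+1)} = \bigcap_k F_{\omega^\lambda \cdot n + \lambda_k}$ and picks $k$ with $x \notin F_{\omega^\lambda \cdot n + \lambda_k}$, which is interchangeable with your extraction of the exact ordinal $\mu$ and the choice of $N$ with $\lambda_k > \mu$.
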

  \begin{proof}
    For $n = 0$ this is the consequence of the definitions, so we 
    need to show that it holds for $n + 1$, if it holds for $n$. 
    For this, it is enough to show that 
    $D_{(f_k)_{k \in \N}, \varepsilon}(F_{\omega^\lambda \cdot n}) 
    \subseteq F_{\omega^\lambda \cdot (n + 1)}$. 
    Let $x \in F_{\omega^{\lambda} \cdot n} \setminus 
    F_{\omega^{\lambda} \cdot (n + 1)}$, we need to show that 
    $x \not \in D_{(f_k)_{k \in \N}, \varepsilon}
    (F_{\omega^{\lambda}\cdot n})$. 
    The sequence $(F_\eta)_{\eta < \omega^{\lambda + 1}}$ is 
    decreasing and continuous, hence 
    $F_{\omega^{\lambda} \cdot (n + 1)} = 
    \bigcap_{\eta < \omega^{\lambda} \cdot (n + 1)} F_\eta = 
    \bigcap_{k \in \N} 
    F_{\omega^{\lambda} \cdot n + \lambda_k}$, 
    so there is a $k \in \N$ such that $x \not \in 
    F_{\omega^{\lambda} \cdot n + \lambda_k}$. 
    
    Since $F_{\omega^{\lambda} \cdot n + \lambda_k}$ 
    is closed, there is a neighborhood $U \ni x$ such that 
    $U \cap F_{\omega^{\lambda} \cdot n + \lambda_k} 
    = \emptyset$. If $i, j \ge k$ then $f_i(y) = f_j(y)$ for all 
    $y \in U \cap F_{\omega^{\lambda} \cdot n}$, hence 
    $x \not \in D_{(f_k)_{k \in \N}, \varepsilon}
    (F_{\omega^{\lambda}\cdot n})$, proving the claim.
  \end{proof}
  Now 
  $$
    D_{(f_k)_{k \in \N}, \varepsilon}^\omega(X) = 
    \bigcap_{n \in \N} D_{(f_k)_{k \in \N}, \varepsilon}^n(X) 
    \subseteq \bigcap_{n \in \N} F_{\omega^\lambda\cdot n} = 
    \emptyset, 
  $$
  hence the convergence $f_k \to f$ is pseudouniform. 
  
  It remains to prove that 
  $f_k \in \bigcup_{\eta < \lambda} \mathscr{B}_1^{\eta + 1}$ for each 
  $k$. 
  
  \begin{claim}
    \label{c:derivatives}
    For every $\varepsilon > 0$ and $m \in \N$ we have 
    $D_{f_k, \varepsilon}^{(\lambda_k + 4) \cdot m}(X) 
    \subseteq F_{\omega^\lambda \cdot m}$.
  \end{claim}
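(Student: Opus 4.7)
The plan is to induct on $m$. The base case $m = 0$ is immediate from $D^0(X) = X = F_0$. For the inductive step, monotonicity of the derivative reduces the task to showing
\begin{equation*}
  D_{f_k, \varepsilon}^{\lambda_k + 4}(F_{\omega^\lambda \cdot m})
  \subseteq F_{\omega^\lambda \cdot (m + 1)},
\end{equation*}
and I may assume $\varepsilon \le 1$, since otherwise already $D_{f_k, \varepsilon}(X) = \emptyset$ as $f_k$ is $\{0,1\}$-valued.

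The key structural observation is that the restriction of $f_k = \chi_{B_k}$ to $F_{\omega^\lambda \cdot m}$ is highly regular: because the differences $F_\zeta \setminus F_{\zeta + 1}$ are pairwise disjoint and $\omega^\lambda \cdot m$ is even, reading off the definition of $B_k$ yields that $f_k$ is constant on $F_{\omega^\lambda \cdot m + \eta} \setminus F_{\omega^\lambda \cdot m + \eta + 1}$ for every $\eta < \lambda_k$ (equal to $1$ when $\eta$ is even and to $0$ when $\eta$ is odd), and moreover $f_k \equiv 0$ on $F_{\omega^\lambda \cdot m + \lambda_k} \setminus F_{\omega^\lambda \cdot (m + 1)}$, since no ordinal in $[\omega^\lambda \cdot m + \lambda_k, \omega^\lambda \cdot (m+1))$ contributes a set to $B_k$.

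With this in hand, I would prove by transfinite induction on $\eta \le \lambda_k$ that $D_{f_k, \varepsilon}^{\eta}(F_{\omega^\lambda \cdot m}) \subseteq F_{\omega^\lambda \cdot m + \eta}$. The limit case follows from continuity of the sequence $(F_\zeta)_\zeta$. For the successor step, if $x$ lies in $D^\eta(F_{\omega^\lambda \cdot m}) \cap (F_{\omega^\lambda \cdot m + \eta} \setminus F_{\omega^\lambda \cdot m + \eta + 1})$, then a neighborhood $U$ of $x$ avoiding the closed set $F_{\omega^\lambda \cdot m + \eta + 1}$ satisfies $U \cap D^\eta(F_{\omega^\lambda \cdot m}) \subseteq F_{\omega^\lambda \cdot m + \eta} \setminus F_{\omega^\lambda \cdot m + \eta + 1}$, on which $f_k$ is constant; hence the oscillation vanishes and $x \not\in D^{\eta + 1}(F_{\omega^\lambda \cdot m})$. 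One more application of precisely the same argument, now using that $f_k$ vanishes on $F_{\omega^\lambda \cdot m + \lambda_k} \setminus F_{\omega^\lambda \cdot (m+1)}$, upgrades the inclusion at $\eta = \lambda_k$ to $D^{\lambda_k + 1}(F_{\omega^\lambda \cdot m}) \subseteq F_{\omega^\lambda \cdot (m+1)}$, which is already stronger than what is claimed; the buffer of three extra derivatives is simply slack.

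There is no serious obstacle: the only point demanding care is the parity of $\omega^\lambda \cdot m + \eta$, which matches the parity of $\eta$ because $\omega^\lambda \cdot m$ is either $0$ or a countable limit ordinal (the inductive step is under $\lambda \ge 1$), so the alternation in the definition of $B_k$ lines up correctly with the even/odd case split above.
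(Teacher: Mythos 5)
Your proof is correct, and its core step takes a genuinely different route from the paper's. Both arguments share the same skeleton: induction on $m$, the reduction via $D_{f_k, \varepsilon}^{(\lambda_k + 4)\cdot(m+1)}(X) \subseteq D_{f_k, \varepsilon}^{\lambda_k + 4}(F_{\omega^\lambda \cdot m})$, and localization to a neighborhood avoiding a closed level of the hierarchy. But the paper does not peel the derivative level by level: it fixes $x \in F_{\omega^\lambda \cdot m} \setminus F_{\omega^\lambda \cdot (m+1)}$ and a neighborhood $U$ avoiding $F_{\omega^\lambda \cdot (m+1)}$, observes that $f_k$ agrees on $F_{\omega^\lambda \cdot m} \cap U$ with $\chi_H$, where $H$ is the local block of even differences, and then bounds $\beta(\chi_H) = \alpha(\chi_H) \le \alpha_1(\chi_H) \le \lambda_k + 4$ by exhibiting an explicit transfinite difference sequence $(P_\eta)_{\eta < \lambda_k + 4}$, invoking Lemma \ref{l:alpha = beta for char} and \cite[3.14]{EKV}. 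Your transfinite induction $D_{f_k, \varepsilon}^{\eta}(F_{\omega^\lambda \cdot m}) \subseteq F_{\omega^\lambda \cdot m + \eta}$ for $\eta \le \lambda_k$, followed by one extra derivative using that $f_k$ vanishes on $F_{\omega^\lambda \cdot m + \lambda_k} \setminus F_{\omega^\lambda \cdot (m+1)}$, replaces that rank machinery by the direct observation that $f_k$ is constant on consecutive differences of the hierarchy; the parity issue you flag ($\omega^\lambda \cdot m$ is $0$ or a limit ordinal since $\lambda \ge 1$ in this part of the theorem's induction, so the parity of $\omega^\lambda \cdot m + \eta$ is that of $\eta$) is indeed the one point demanding care, and you handle it correctly. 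What your route buys: it is self-contained (no appeal to Lemma \ref{l:alpha = beta for char}, to the comparison of $\alpha$ with $\alpha_1$, or to the $(P_\eta)$ bookkeeping) and it yields the sharper bound $D_{f_k, \varepsilon}^{\lambda_k + 1}(F_{\omega^\lambda \cdot m}) \subseteq F_{\omega^\lambda \cdot (m+1)}$, which explains the paper's ``$+4$'' as mere padding forced by the normalization of separation sequences (they must begin with $X$ and terminate in $\emptyset$). What the paper's route buys: it reuses rank lemmas already established instead of writing out another transfinite induction. Your reduction to $\varepsilon \le 1$ is harmless but superfluous, since your constancy argument makes the relevant oscillations vanish for every $\varepsilon > 0$.
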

  First we show that it is enough to prove the claim. 
  Since $\lambda_k > 0$, 
  $(\lambda_k + 4)\cdot \omega = \lambda_k \cdot \omega$, hence 
  using the fact that $\bigcap_{\eta < \omega^{\lambda + 1}} F_\eta = 
  \emptyset$ we have 
  $$
    D_{f_k, \varepsilon}^{\lambda_k \cdot \omega}(X) = 
    \bigcap_{m \in \N} 
    D_{f_k, \varepsilon}^{(\lambda_k + 4) \cdot m}(X) \subseteq 
    \bigcap_{m \in \N} F_{\omega^\lambda \cdot m} = \emptyset, 
  $$
  showing that $\beta(f_k) \le \lambda_k \cdot \omega$. 
  If $\lambda$ is limit then $\lambda_k \le \omega^\theta$ for some 
  $\theta < \lambda$, hence $\beta (f_k) \le \lambda_k \cdot \omega 
  \le \omega^{\theta + 1}$, showing that 
  $f_k \in \bigcup_{\eta < \lambda} \mathscr{B}_1^{\eta + 1}$ in this 
  case. 
  If $\lambda$ is successor then let $\lambda = \theta + 1$. 
  Now $\lambda_k < \omega^\theta \cdot l$ for some $l \in \N$, 
  hence $\lambda_k \cdot \omega \le \omega^{\theta + 1}$, showing 
  that $f_k \in \mathscr{B}_1^{\theta + 1} \subseteq 
  \bigcup_{\eta < \lambda} \mathscr{B}_1^{\eta + 1}$. 
  Now it only remains to prove the claim.
  
  \begin{proof}[Proof of Claim \ref{c:derivatives}]
    We prove this by induction on $m$. For $m = 0$ this is the 
    consequence of the definitions. Suppose it holds for $m$, to 
    prove it for $m + 1$ we need to show that if 
    $x \in F_{\omega^\lambda \cdot m} \setminus 
    F_{\omega^\lambda \cdot (m + 1)}$ then 
    $x \not \in D_{f_k, \varepsilon}^{\lambda_k + 4}
    (F_{\omega^\lambda \cdot m})$.
  
    There exists a neighborhood $U$ of $x$ with 
    $U \cap F_{\omega^\lambda \cdot (m + 1)} = \emptyset$ and let 
    $$
      H = \bigcup_{\begin{subarray}{c}
        \omega^\lambda \cdot m \le \eta < 
        \omega^\lambda \cdot m + \lambda_k \\ \text{$\eta$ is even}
        \end{subarray}} F_\eta \setminus F_{\eta + 1}.
    $$
    It is easy to see that 
    $\alpha_1(\chi_H) \le \lambda_k + 4$, since $H$ can be 
    written as the transfinite difference of closed sets of length 
    $\lambda_k + 4$ as the sequence 
    $(P_\eta)_{\eta < \lambda_k + 4}$, where 
    \begin{equation*}
      P_\eta = \left\{ 
      \begin{array}{cl}
        X & \text{if $\eta = 0$ or $1$} \\
        F_{\omega^\lambda \cdot m + \eta - 2} & 
          \text{if $2 \le \eta < \omega$} \\
        F_{\omega^\lambda \cdot m + \eta} & 
          \text{if $\omega \le \eta < \lambda_k$} \\
        F_{\omega^\lambda \cdot m + \lambda_k} & 
          \text{if $\lambda_k \le \eta \le \lambda_k + 1$} \\
        \emptyset & 
          \text{if $\lambda_k + 2 \le \eta \le \lambda_k + 3$}
      \end{array}\right.
    \end{equation*}  
    works. Note that $\lambda_k$ is even, hence $H$ is really the 
    transfinite difference of the sequence. 
    Using Lemma \ref{l:alpha = beta for char} and \cite[3.14]{EKV} we 
    have $\beta(\chi_H) = \alpha(\chi_H) \le \alpha_1(\chi_H) \le 
    \lambda_k + 4$.
    But as 
    $f_k(y) = \chi_{B_k}(y) = \chi_H(y)$ for every 
    $y \in F_{\omega^\lambda \cdot m} \cap U$, we have 
    $D_{f_k, \varepsilon}^{\lambda_k + 4}(F_{\omega^\lambda \cdot m})
    \cap U = D_{\chi_H, \varepsilon}^{\lambda_k + 4}
    (F_{\omega^\lambda \cdot m}) \cap U = \emptyset$, hence 
    $x \not \in D_{f_k, \varepsilon}^{\lambda_k + 4}
    (F_{\omega^\lambda \cdot m})$, proving the claim. 
  \end{proof}
  This finishes the proof that $f \in \Phi_\lambda$ for a 
  characteristic function $f \in \mathscr{B}_1^{\lambda + 1}$. 
  
  Now let $f \in \mathscr{B}_1^{\lambda + 1}$ be a step function, 
  i.e., $f = \sum_{i = 1}^n c_i \chi_{A_i}$, where the $c_i$'s are 
  distinct real numbers and the $A_i$'s form a partition of $X$. 
  For each $i$, 
  $\chi_{A_i} \in \mathscr{B}_1^{\lambda + 1}$ by \cite[3.38]{EKV}, 
  hence for each $i$ there exists a sequence $(f_i^k)_{k \in \N}$, 
  such that $(f_i^k)_{k \in \N} \to \chi_{A_i}$ pseudouniformly, 
  and $f_i^k \in \bigcup_{\eta < \lambda} \mathscr{B}_1^{\eta + 1}$. 
  Let $f^k = \sum_{i = 1}^n c_i \cdot f_i^k$. 
  Using Lemma \ref{l:gamma is additive}, 
  $\gamma((f^k)_{k \in \N}) \le \omega$, and it can be easily seen 
  that $f^k \to f$ pointwise. It remains to prove that 
  $f^k \in \bigcup_{\eta <\lambda} \mathscr{B}_1^{\eta + 1}$ for each 
  $k$. Let $k \in \N$ be fixed, then 
  $f_i^k \in \mathscr{B}_1^{\lambda_i + 1}$ for some 
  $\lambda_i < \lambda$. Hence with 
  $\lambda' = \max\{\lambda_i : 1 \le i \le n\} < \lambda$ we have 
  $f_i^k \in \mathscr{B}_1^{\lambda' + 1}$ for every $i$. 
  Now \cite[3.29]{EKV} yields that $f^k \in \mathscr{B}_1^{\lambda' + 1} 
  \subseteq \bigcup_{\eta < \lambda} \mathscr{B}_1^{\eta + 1}$, 
  proving that $f \in \Phi_\lambda$. 
    
  To finish the proof of the theorem, it remains to prove that 
  $f \in \Phi_\lambda$ for an arbitrary 
  $f \in \mathscr{B}_1^{\lambda + 1}$. 
  
  Let $f \in \mathscr{B}_1^{\lambda + 1}$. 
  By Lemma \ref{l:sum of step functions} there exists a sequence 
  $(g^k)_{k \in \N}$ of non-negative step-functions such that 
  $g^k \in \mathscr{B}_1^{\lambda + 1}$, $\inf f + \sum_k g^k = f$ 
  and $\|g^k\| \le \frac{1}{2^k}$ for $k \ge 1$. We can replace 
  $g^0$ with $g^0 + \inf f$, so now we have $\sum_k g^k = f$. 
  Since $g^k$ is a step-function, $g^k \in \Phi_\lambda$, hence for each $k$ 
  we have a sequence $(g_n^k)_{n \in \N}$ tending pseudouniformly 
  to $g^k$ with $g_n^k \in \bigcup_{\eta < \lambda} \Phi_\eta = 
  \bigcup_{\eta < \lambda} 
  \mathscr{B}_1^{\eta + 1}$ for each $n, k \in \N$. We first show that we 
  can suppose that $\|g_n^k\| \le \|g^k\|$. For every $k \in \N$ let 
  $h^k : \R \to \R$ be the following function: 
  \begin{equation*}
    h^k(x) = \left\{ 
    \begin{array}{cl}
      0 & \text{if $x < 0$}, \\
      x & \text{if $0 \le x \le \frac{1}{2^k}$}, \\
      \frac{1}{2^k} & \text{if $\frac{1}{2^k} < x$}.
    \end{array} \right. 
  \end{equation*}
  Then $h^k$ is a Lipschitz function, hence $\beta(h^k \circ g_n^k) \le 
  \beta(g_n^k)$ using Lemma \ref{l:beta(g f) <= beta(f)}, thus 
  $h^k \circ g_n^k \in \bigcup_{\eta < \lambda}\mathscr{B}_1^{\eta + 1}$. 
  Using the same arguments as in the proof of 
  Lemma \ref{l:beta(g f) <= beta(f)}, it is easy to see that 
  $\gamma((h^k \circ g_n^k)_{n \in \N}) \le \gamma((g_n^k)_{n \in \N}) \le 
  \omega$, hence the sequence $(h^k \circ g_n^k)_{n \in \N}$ is 
  pseudouniformly convergent for every $k$. Using the continuity of 
  $h^k$ we have $(h^k \circ g_n^k)_{n \in \N} \to h^k \circ g^k = g^k$. 
  This shows that by substituting $g_n^k$ with $h^k \circ g_n^k$, we can 
  really assume that $\|g_n^k\| \le \|g^k\|$.
  
  Now we prove the following claim. 
  \begin{claim}
    Let $f_n = \sum_{k \le n} g_n^k$, then the sequence 
    $(f_n)_{n \in \N}$  
    tends pseudouniformly to $f$.
  \end{claim}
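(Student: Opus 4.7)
The plan is to handle pointwise convergence and pseudouniform convergence separately, reducing the latter to the finite-sum case via truncation. Pointwise convergence follows from a head/tail decomposition: writing
\[
  f_n(x) - f(x) = \sum_{k=0}^{n} \bigl(g_n^k(x) - g^k(x)\bigr) - \sum_{k > n} g^k(x),
\]
the tail tends to $0$ since $\|g^k\| \le 2^{-k}$ for $k \ge 1$, while for the first sum we split at a fixed index $K$, exploiting the uniform bound $|g_n^k - g^k| \le 2\|g^k\|$ to control the terms beyond $K$ and the fact that $g_n^k \to g^k$ for each fixed $k$ to control the head.

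For pseudouniform convergence, fix $\varepsilon > 0$ and choose $K$ with $2 \cdot 2^{-K} < \varepsilon/2$. For $n \ge K$ set $A_n = \sum_{k \le K} g_n^k$, so that $|f_n - A_n| \le 2^{-K}$ everywhere. Each sequence $(g_n^k)_{n \in \N}$ is pseudouniformly convergent, i.e.\ $\gamma\bigl((g_n^k)_{n \in \N}\bigr) \le \omega$. Applying Lemma \ref{l:gamma is additive} finitely many times (with $\omega^\lambda = \omega$, so $\lambda = 1$) yields $\gamma\bigl((A_n)_{n \in \N}\bigr) \le \omega$, and hence $D_{(A_n)_{n \in \N}, \varepsilon/2}^{\omega}(X) = \emptyset$.

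To transfer this to $(f_n)_{n \in \N}$, the key is that $|f_n - A_n| \le 2^{-K}$ forces
\[
  \omega((f_n)_{n \in \N}, x, F) \le \omega((A_n)_{n \in \N}, x, F) + 2 \cdot 2^{-K}
\]
for every $x \in X$ and every closed $F \subseteq X$: one just takes $\inf_N \inf_U$ of the obvious pointwise bound on the $\sup$, noting that restricting to $N \ge K$ does not change the infimum because the sup is nonincreasing in $N$. By the choice of $K$ this gives $D_{(f_n)_{n \in \N}, \varepsilon}(F) \subseteq D_{(A_n)_{n \in \N}, \varepsilon/2}(F)$ for every closed $F$; a routine transfinite induction using the monotonicity of the derivative propagates the inclusion to iterated derivatives, so $D_{(f_n)_{n \in \N}, \varepsilon}^{\omega}(X) \subseteq D_{(A_n)_{n \in \N}, \varepsilon/2}^{\omega}(X) = \emptyset$. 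Since $\varepsilon$ was arbitrary, $\gamma\bigl((f_n)_{n \in \N}\bigr) \le \omega$, as required.

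The main obstacle is the oscillation estimate: the derivative tolerance must drop from $\varepsilon$ to $\varepsilon/2$ to absorb the uniform error $2 \cdot 2^{-K}$, which is precisely why $K$ is chosen with $2 \cdot 2^{-K} < \varepsilon/2$. Once this loss-of-tolerance trick is in place, the iteration of derivatives and the appeal to Lemma \ref{l:gamma is additive} for $(A_n)_{n \in \N}$ are straightforward.
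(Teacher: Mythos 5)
Your proof is correct and follows essentially the same route as the paper: the same head/tail split for pointwise convergence (using $\|g_n^k\| \le \|g^k\| \le 2^{-k}$), and for pseudouniformity the same truncation $A_n = \sum_{k \le K} g_n^k$, the same appeal to Lemma \ref{l:gamma is additive} (with $\lambda = 1$), and the same transfinite induction yielding $D_{(f_n)_{n \in \N}, \varepsilon}^\eta(X) \subseteq D_{(A_n)_{n \in \N}, \varepsilon/2}^\eta(X)$. The only cosmetic difference is that you package the tolerance-loss step as the oscillation inequality $\omega((f_n)_{n \in \N}, x, F) \le \omega((A_n)_{n \in \N}, x, F) + 2 \cdot 2^{-K}$, whereas the paper states the equivalent pointwise implication that $|f_n(y) - f_m(y)| \ge \varepsilon$ forces $|A_n(y) - A_m(y)| \ge \varepsilon/2$ for $n, m \ge K$.
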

  \begin{proof}
    First we show that $f_n \to f$ pointwise. Let $\varepsilon > 0$ 
    and $x \in X$ be fixed, and let $K \in \N$ be large enough so that 
    $\frac{1}{2^{K - 2}} < \frac{\varepsilon}{2}$. Then there exists a 
    common $N \ge K \in \N$ such that for all $k < K$ and $n > N$ we 
    have 
    $|g_n^k(x) - g^k(x)| \le \frac{\varepsilon}{2K}$.
    Thus, for $n > N$, 
    \begin{equation*}
    \begin{split}
      |f_n(x) - f(x)| = 
      \left|\sum_{k \le n} g_n^k(x) -
        \sum_{k \in \N} g^k(x) \right| \le \\
      \sum_{k < K}|g_n^k(x) - g^k(x)| + 
        \sum_{K \le k \le n} |g_n^k(x)| + 
        \sum_{k \ge K} |g^k(x)| \le 
      \frac{\varepsilon}{2K}\cdot K + 2\cdot \frac{1}{2^{K - 1}} \le 
      \varepsilon,
    \end{split}
    \end{equation*}
    proving the pointwise convergence. 

    Let $\varepsilon > 0$, it remains to show that 
    $D_{(f_n)_{n \in \N}, \varepsilon}^\omega(X) = \emptyset$. 
    Let $K \in \N$ be large enough so that 
    $2\frac{1}{2^K} < \frac{\varepsilon}{2}$. 
    Then for $n, m \ge K$ we have 
    $$
      \|f_n - f_m\| = 
      \left\| \sum_{k \le n} g_n^k - \sum_{k \le m} g_m^k \right\| \le 
      \left\| \sum_{k \le K} g_n^k - \sum_{k \le K} g_m^k \right\| + 
      2\frac{1}{2^K}, 
    $$
    hence if $|f_n(y) - f_m(y)| \ge \varepsilon$ then 
    $\left| \sum_{k \le K} g_n^k(y) - \sum_{k \le K} g_m^k(y)\right| 
    \ge \frac{\varepsilon}{2}$. 
    From this, using transfinite induction, one can 
    easily get for all $\eta < \omega_1$ that 
    $$
      D_{(f_n)_{n \in \N}, \varepsilon}^\eta(X) \subseteq 
      D_{(\sum_{k \le K} g_n^k)_{n \in \N}, \frac{\varepsilon}
      {2}}^\eta(X). 
    $$
    Using Lemma \ref{l:gamma is additive} the sequence 
    $(\sum_{k \le K} g_n^k)_{n \in \N}$ converges pseudouniformly 
    to $\sum_{k \le K} g^k$, hence 
    $D_{(\sum_{k \le K} g_n^k)_{n \in \N}, \frac{\varepsilon}
    {2}}^\omega(X) = \emptyset$, proving that 
    $D_{(f_n)_{n \in \N}, \varepsilon}^\omega(X) = \emptyset$.
  \end{proof}
  Using this claim it remains to prove that for each $n$, 
  $f_n \in \bigcup_{\eta < \lambda} \mathscr{B}_1^{\eta + 1}$. 
  Using the same idea as above, we have a $\lambda' < \lambda$ with 
  $g_n^k \in \mathscr{B}_1^{\lambda' + 1}$ for every $k \le n$, 
  hence by \cite[3.29]{EKV} we have 
  $f_n \in \mathscr{B}_1^{\lambda' + 1} \subseteq 
  \bigcup_{\eta < \lambda} \mathscr{B}_1^{\eta + 1}$.
  This show that $\Phi_\lambda \supseteq \mathscr{B}_1^{\lambda + 1}$, 
  finishing the proof of the theorem. 
\end{proof}

Now we give a generalized version of the above theorem for Baire 
class $\xi$ functions. From now on, let $1 < \xi < \omega_1$ 
be a fixed ordinal.

\begin{definition}
  Let $\mathcal{F}$ be a class of bounded Baire class $\xi$ functions 
  and let 
  \begin{equation*}
  \begin{split}
    \Phi(\mathcal{F}) = \Big\{f \in \mathcal{B}_\xi : 
    \text{$f$ is bounded, }
      \exists f_n \in \mathcal{F}, 
    \tau' \supseteq \tau \text{ Polish} \\
    \left(\tau' \subseteq \boldsymbol{\Sigma}^0_\xi(\tau), 
    f_n, f \in \mathcal{B}_1(\tau'), 
    f_n \to f \text{ pseudouniformly with respect to } \tau'\right)\Big\}.
  \end{split}
  \end{equation*}
  As in the Baire class 1 case, we define the families 
  $\Phi_\lambda$ as follows. Let $\Phi_0 = \mathscr{B}_\xi^1$ 
  and for $0 < \lambda < \omega_1$ let 
  $$
    \Phi_\lambda = 
      \Phi\left(\bigcup_{\eta < \lambda} \Phi_\eta\right). 
  $$
\end{definition}
  
\begin{theorem}
  For every ordinal $\lambda < \omega_1$, we have 
  $\Phi_\lambda = \mathscr{B}_\xi^{\lambda + 1}$. 
\end{theorem}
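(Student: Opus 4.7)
The plan is to reduce the statement to Theorem 3.3 via topology refinement, exactly as Theorem 3.2 was reduced to Theorem 3.1. I would proceed by transfinite induction on $\lambda$; the base case $\lambda = 0$ is just the definition $\Phi_0 = \mathscr{B}_\xi^1$, so from now on assume $\lambda > 0$ and that the theorem holds for all smaller ordinals.

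For the inclusion $\Phi_\lambda \subseteq \mathscr{B}_\xi^{\lambda + 1}$, let $f \in \Phi_\lambda$ be witnessed by a Polish refinement $\tau' \supseteq \tau$ with $\tau' \subseteq \boldsymbol{\Sigma}^0_\xi(\tau)$ and by $(f_n)_{n \in \N}$ with $f_n \in \bigcup_{\eta < \lambda} \Phi_\eta$, $f_n, f \in \mathcal{B}_1(\tau')$, and $f_n \to f$ pseudouniformly in $\tau'$. By the inductive hypothesis, $f_n \in \mathscr{B}_\xi^{\eta_n + 1}$ for some $\eta_n < \lambda$, so Remark \ref{r:bounded classes} yields Polish topologies $\tau'_n \supseteq \tau$ in $\boldsymbol{\Sigma}^0_\xi(\tau)$ with $f_n \in \mathscr{B}_1^{\eta_n + 1}(\tau'_n)$. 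I would then apply the standard topology-refinement machinery (see \cite[13.3]{K}) to the countable collection obtained by taking a countable subbase of each $\tau'_n$ and of $\tau'$ (all lying in $\boldsymbol{\Sigma}^0_\xi(\tau)$) in order to produce a single Polish topology $\tau'' \supseteq \tau'$ refining each $\tau'_n$ and still satisfying $\tau'' \subseteq \boldsymbol{\Sigma}^0_\xi(\tau)$.

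The key point is that the relevant ranks can only decrease when the topology is refined: in a finer topology the infimum in the definition of the oscillation $\omega(f_n, x, F)$ and of $\omega((f_n), x, F)$ is taken over a larger family of neighborhoods, so a straightforward transfinite induction gives inclusions such as $D_{f_n, \varepsilon}^{\tau''}{}^{\theta}(X) \subseteq D_{f_n, \varepsilon}^{\tau'_n}{}^{\theta}(X)$, and similarly for the convergence-rank derivatives. Consequently $f_n \in \mathscr{B}_1^{\eta_n + 1}(\tau'')$ and $f_n \to f$ pseudouniformly in $\tau''$. Applying Theorem \ref{t:Baire 1 generate} inside the Polish space $(X, \tau'')$ yields $f \in \mathscr{B}_1^{\lambda + 1}(\tau'')$, and a final appeal to Remark \ref{r:bounded classes} gives $f \in \mathscr{B}_\xi^{\lambda + 1}$.

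For the reverse inclusion, let $f \in \mathscr{B}_\xi^{\lambda + 1}$ and, again via Remark \ref{r:bounded classes}, pick $\tau' \in T_{f, \xi}$ with $f \in \mathscr{B}_1^{\lambda + 1}(\tau')$. Theorem \ref{t:Baire 1 generate} applied in $(X, \tau')$ produces a sequence $(f_n)$ converging to $f$ pseudouniformly in $\tau'$ with $f_n \in \bigcup_{\eta < \lambda} \mathscr{B}_1^{\eta + 1}(\tau')$. Since $\tau'$ also belongs to $T_{f_n, \xi}$, Remark \ref{r:bounded classes} places each $f_n$ in some $\mathscr{B}_\xi^{\eta + 1}$ with $\eta < \lambda$, and the inductive hypothesis identifies this with $\Phi_\eta$. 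The witnessing topology $\tau'$ therefore exhibits $f \in \Phi(\bigcup_{\eta < \lambda} \Phi_\eta) = \Phi_\lambda$. The main obstacle I expect is the careful bookkeeping showing that the common refinement $\tau''$ actually preserves both $\mathscr{B}_1^{\eta_n + 1}$-membership of the $f_n$'s and their pseudouniform convergence; once the monotonicity of oscillation under refinement is laid down cleanly, the rest of the argument is routine.
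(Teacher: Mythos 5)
Your proposal is correct and follows essentially the same route as the paper: both directions are reduced to Theorem \ref{t:Baire 1 generate} via Remark \ref{r:bounded classes}, with the only difference being that where you invoke \cite[13.3]{K} and verify by hand the monotonicity of the oscillation and convergence derivatives under Polish refinements, the paper simply cites \cite[5.12]{EKV} and \cite[5.13]{EKV} for the common refinement $\tau'' \subseteq \boldsymbol{\Sigma}^0_\xi(\tau)$ and the preservation of $\mathscr{B}_1^{\lambda_n+1}$-membership and of $\gamma((f_n)_{n\in\N}) \le \omega$. Your explicit derivative-inclusion argument is exactly the content hidden behind those citations, so no gap remains.
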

\begin{proof}
  For $\lambda = 0$ the statement is obvious. We first prove the 
  direction $\Phi_\lambda \supseteq \mathscr{B}_\xi^{\lambda + 1}$ 
  by transfinite induction on $\lambda$. 
  Let $f \in \mathscr{B}_\xi^{\lambda + 1}$. 
  By Remark \ref{r:bounded classes} there exists a 
  Polish topology $\tau' \supseteq \tau$ such that 
  $f \in \mathscr{B}_1^{\lambda + 1}(\tau')$. Thus, by 
  Theorem \ref{t:Baire 1 generate} there exists a sequence 
  $(f_n)_{n \in \N}$ of functions such that $f_n \to f$ 
  pseudouniformly in the topology $\tau'$, and for each $n$, 
  $f_n \in \bigcup_{\eta < \lambda} \mathscr{B}_1^{\eta + 1}(\tau')$. 

  It is easy to check from the definition that 
  $\tau' \in T_{f_n, \xi}$ for each $n$, hence Remark 
  \ref{r:bounded classes} now yields 
  $f_n \in \bigcup_{\eta < \lambda} \mathscr{B}_\xi^{\eta + 1}(\tau)$. 
  The sequence $(f_n)_{n \in \N}$ and the topology $\tau'$ is 
  exactly what is required by the above definition, showing that 
  $f \in \Phi\left(\bigcup_{\eta < \lambda} \mathscr{B}_\xi^{\eta + 1} 
  \right)$, proving $f \in \Phi_\lambda$. This proves that 
  $\Phi_\lambda \supseteq \mathscr{B}_\xi^{\lambda + 1}$. 
  
  We prove the other direction by transfinite induction on $\lambda$. 
  Let $f \in \Phi_\lambda$, i.e., there is a 
  sequence $(f_n)_{n \in \N}$ and a topology $\tau' \supseteq \tau$ 
  with $\tau' \subseteq \boldsymbol{\Sigma}^0_\xi(\tau)$, 
  $f, f_n \in \mathcal{B}_1(\tau')$,
  $f_n \to f$ pseudouniformly with respect to the topology $\tau'$ 
  and finally 
  $f_n \in \bigcup_{\eta < \lambda} \Phi_\eta = 
  \bigcup_{\eta < \lambda} \mathscr{B}_\xi^{\eta + 1}$, 
  using the induction hypothesis for each $\eta < \lambda$. 
  Consequently, there exists an ordinal $\lambda_n < \lambda$ for each 
  $n$, such that $f_n \in \mathscr{B}_\xi^{\lambda_n + 1}$. 
 
  Using Remark \ref{r:bounded classes} again, there exists a Polish 
  topology $\tau_n \in T_{f_n, \xi}$ such that 
  $f_n \in \mathscr{B}_1^{\lambda_n + 1}(\tau')$. 

  By \cite[5.12]{EKV} there exists a common Polish refinement $\tau''$ 
  of $\tau'$ and each $\tau_n$ with 
  $\tau'' \subseteq \boldsymbol{\Sigma}^0_\xi(\tau)$. 
  Then by \cite[5.13]{EKV} $f_n, f \in \mathcal{B}_1(\tau'')$, 
  moreover, $f_n \in \mathscr{B}_1^{\lambda_n + 1}(\tau')$ for each 
  $n$ and $\gamma_{\tau''}((f_n)_{n \in \N}) \le 
  \gamma_{\tau'}((f_n)_{n \in \N}) \le \omega$ can easily be seen 
  from the definition. 
  Theorem \ref{t:Baire 1 generate} yields that 
  $f \in \mathscr{B}_1^{\lambda + 1}(\tau'')$ but since 
  one can easily check that $\tau'' \in T_{f, \xi}$, we have 
  $f \in \mathscr{B}_\xi^{\lambda + 1}(\tau)$ again using 
  Remark \ref{r:bounded classes}, finishing the proof of the theorem. 
\end{proof}

\subsection*{Acknowledgment}
The author is greatly indebted to M.~Elekes for numerous 
suggestions about the manuscript.

The author was partially supported by the Hungarian Scientific Foundation 
grants no.~104178 and 113047.

\bigskip

\noindent Viktor Kiss\\
Department of Analysis\\
E\"otv\"os Lor\'and University\\
P\'az\-m\'any P. s. 1/c, H-1117, Budapest, Hungary\\
E-mail: kivi@cs.elte.hu
\end{document}